\newif\ifPDF
\newtheorem{theorem}{Theorem}[section]
\newtheorem{lemma}[theorem]{Lemma}
\newtheorem{proposition}[theorem]{Proposition} 
\newtheorem{remark}[theorem]{Remark}
\newcommand{\dint}{\displaystyle\int}
\newcommand{\eps}{\varepsilon}
\newcommand{\fc}{\mathfrak{c}}
\newcommand{\bbR}{\mathbb R} \newcommand{\bbS}{\mathbb S}
\newcommand{\bnu}{{\boldsymbol \nu}}
 \newcommand{\bv}{\mathbf v} 
 \newcommand{\bx}{\mathbf x}
\newcommand{\cA}{\mathcal A} 
\newcommand{\cC}{\mathcal C}
 \newcommand{\cL}{\mathcal L}
\newcommand{\cO}{\mathcal O}
\newcommand{\aver}[1]{\langle {#1} \rangle}
 \newcommand{\redsout}{\bgroup\markoverwith{\textcolor{red}{\rule[1ex]{3pt}{.8pt}}}\ULon}
\newcommand{\wt}{\widetilde}
\newcommand{\wh}{\widehat}
\newenvironment{keywords}{\noindent{\bf Key words.}\small}{\par\vspace{1ex}}
\newenvironment{AMS}{\noindent{\bf AMS subject classifications 2010.}\small}{\par}
\newcommand{\DELETE}[1]{}
\newcommand{\LC}{\left(}
\newcommand{\RC}{\right)}
\newcommand{\p}{\partial}
\title{Inverse transport and diffusion problems in photoacoustic imaging with nonlinear absorption}
\author{
	Ru-Yu Lai\thanks{School of Mathematics, University of Minnesota, Minneapolis, MN 55455; 	\href{mailto:rylai@umn.edu}{rylai@umn.edu}
	}
	\and
	Kui Ren\thanks{Department of Applied Physics and Applied Mathematics, Columbia University, New York, NY 10027;
		\href{mailto:kr2002@columbia.edu}{kr2002@columbia.edu}
	}
	\and
	Ting Zhou\thanks{Department of Mathematics, Northeastern University, Boston, MA 02115; 
		\href{mailto:t.zhou@northeastern.edu}{t.zhou@northeastern.edu}
	}
}
\begin{document}


\maketitle



\begin{abstract}
Motivated by applications in imaging nonlinear optical absorption by photoacoustic tomography (PAT), we study in this work inverse coefficient problems for a semilinear radiative transport equation and its diffusion approximation with internal data that are functionals of the coefficients and the solutions to the equations. Based on the techniques of first- and second-order linearization, we derive uniqueness and stability results for the inverse problems. For uncertainty quantification purpose, we also establish the stability of the reconstruction of the absorption coefficients with respect to the change in the scattering coefficient.
\end{abstract}


\begin{keywords}
	semilinear radiative transport, inverse coefficient problem, inverse diffusion, uniqueness and stability, uncertainty quantification, quantitative photoacoustic imaging
\end{keywords}


\begin{AMS}
	 35R30, 49N45, 65M32, 74J25.
\end{AMS}


\section{Introduction}
\label{SEC:Intro}

This paper is devoted to the study of inverse coefficient problems in quantitative photoacoustic imaging of optically heterogeneous materials, such as biological tissues, with a nonlinear absorption effect. To describe the problem, let us denote the underlying medium to be probed by $\Omega\subseteq\bbR^d$ ($d\ge 2$), an open bounded convex domain with smooth boundary $\partial\Omega$. We denote by $\bbS^{d-1}$ the unit sphere in $\bbR^d$, and define the phase space $X:=\Omega\times\bbS^{d-1}$ as well as the incoming boundary of the phase space 
$$
\Gamma_-:=\{(\bx, \bv)\ |\  (\bx, \bv)\in\partial\Omega\times\bbS^{d-1}\ \mbox{s.t.}\ -\bnu(\bx)\cdot \bv > 0\},$$ 
where $\bnu(\bx)$ is the unit outer normal vector at $\bx\in\partial\Omega$. In a photoacoustic experiment, we send near infra-red (NIR) photons into the media $\Omega$. The density of the photons at $\bx\in\Omega$ traveling in the direction $\bv\in\bbS^{d-1}$, $u(\bx, \bv)$, solves the following semilinear radiative transport equation~\cite{Arridge-IP99,BaJoJu-IP10,ReZh-SIAM21}
\begin{equation}\label{EQ:ERT TP}
\begin{array}{rcll}
\bv \cdot \nabla u(\bx, \bv) + \sigma_a(\bx) u(\bx, \bv) + \sigma_b \aver{u} u(\bx, \bv) &=& \sigma_s(\bx) K (u)(\bx, \bv), &\text{in}\ X\\
u(\bx, \bv) &= & g(\bx, \bv), &\text{on}\ \Gamma_{-}
\end{array}
\end{equation}
where $\sigma_a, \sigma_b$ are the single-photon and two-photon absorption coefficients, respectively, and $\sigma_s$ is scattering coefficient.
We denote by $\aver{u}$ the integral of $u(\bx, \bv)$ over the variable $\bv$, that is, 
\begin{equation*}\label{EQ:Aver}
	\aver{u}:=\int_{\bbS^{d-1}} u(\bx, \bv) d\bv\,,
\end{equation*}
with $d\bv$ being the \emph{normalized} surface measure on $\bbS^{d-1}$ (that is, $\int_{\bbS^{d-1}} d\bv=1$). The linear scattering operator $K$ is defined through the relation
\begin{equation*}\label{EQ:Scattering Op}
\begin{aligned}
K (u)(\bx, \bv) : =  \int_{\bbS^{d-1}}\Big\{ \Theta(\bv, \bv') u(\bx, \bv')  - \Theta(\bv', \bv) u(\bx, \bv)\Big\} d\bv',
\end{aligned}
\end{equation*}
with the non-negative kernel $\Theta(\bv, \bv')\geq 0$ satisfying the normalization conditions 
\[
\dint_{\bbS^{d-1}} \Theta(\bv, \bv') d\bv' = \dint_{\bbS^{d-1}} \Theta(\bv, \bv') d\bv = 1.
\]

The pressure field generated by the photoacoustic effect can be written as~\cite{FiScSc-PRE07}
\begin{equation}\label{EQ:Data TP}
	H_T(\bx) = \Xi(\bx) \Big[\sigma_a(\bx) \aver{u}(\bx) + \sigma_b(\bx) \aver{u}^2(\bx)\Big], \qquad \bx\in \bar\Omega.
\end{equation}
where $\Xi$ is the Gr\"uneisen coefficient that describes underlying medium's photoacoustic efficiency. This initial pressure field generated by single-photon and two-photon absorption processes evolves, in the form of ultrasound, according to the classical acoustic wave equation~\cite{BaUh-IP10,FiScSc-PRE07}. Through the measurement of the ultrasound data reaching the surface of the medium, one can reconstruct the internal information $H_T(\bx)$. This is by now a well-established process; see, for instance, ~\cite{AmBrJuWa-LNM12,BuMaHaPa-PRE07,CoArBe-IP07,HaScSc-M2AS05,Hristova-IP09,KiSc-SIAM13,KuKu-EJAM08,StUh-IP09} and references therein for more details. 

The objective of this paper is on the second step of the photoacoustic imaging technology: to reconstruct the optical coefficients $\sigma_a, \sigma_b, \sigma_s$ and possibly $\Xi$ from the internal information $H_T$ reconstructed from the acoustic measurement. What makes our study different from existing results on quantitative photoacoustic imaging, for instance those in~\cite{BaRe-CM11,BaUh-IP10,GaOsZh-LNM12,LaCoZhBe-AO10,MaRe-CMS14,PuCoArKaTa-IP14,ReZhZh-IP15,SaTaCoAr-IP13}, is that the transport model~\eqref{EQ:ERT TP} we consider here contains the semilinear term that describes the two-photon absorption effect of the underlying medium~\cite{BaReZh-JBO18,ReZh-SIAM18}. This additional nonlinearity makes the analysis of the inverse problem much more complicated~\cite{ReZh-SIAM21,StZh-arXiv21}.

\paragraph{Diffusion approximation.} When the underlying medium has very strong scattering but weak absorption, one can approximate the transport equation model with a diffusion equation model that is easier to deal with. This is a well-established result in kinetic theory in the absence of the semilinear term $\sigma_b\aver{u} u$ in~\eqref{EQ:ERT TP}; see for instance ~\cite{DaLi-Book93-6} for a detailed mathematical derivation. In the presence of the semilinear term $\sigma_b\aver{u} u$, the diffusion approximation follows straightforwardly from the classical theory under the assumption that the transport solution is at most $\cO(1)$. This is indeed the regime where our study in the rest of the paper will be, that is, for small boundary data. Therefore, we write down the following semilinear diffusion approximation without further justification, and with a little bit abuse of notations:
\begin{align}\label{EQ:Diff TP}
\begin{array}{rcll}
-\nabla \cdot\gamma(\bx) \nabla u(\bx) +\sigma_a(\bx) u(\bx)+\sigma_b(\bx) u(\bx) u(\bx) &=& 0, & \mbox{in}\ \ \Omega\\
u(\bx) &=& g(\bx), &\mbox{on}\ \partial\Omega
\end{array}
\end{align}
where $\gamma$ is related to $\sigma_a$, $\sigma_b$ and $\sigma_s$. The internal data in the diffusion approximation now take the form
\begin{equation}\label{EQ:Data Diff}
	H_D(\bx)=\Xi[\sigma_a(\bx) u(\bx)+\sigma_b(\bx) u(\bx) u(\bx)]\qquad \bx\in \bar\Omega.
\end{equation}
The inverse problem in this case is to reconstruct information on $\Xi, \gamma, \sigma_a$ and $\sigma_b$ from the data in the form of $H_D$. 

Note that the diffusion model we take here has the semilinear term $u(\bx) u(\bx)$ instead of $|u(\bx)|u(\bx)$ as in~\cite{ReZh-SIAM18}. Using the $|u(\bx)|u(\bx)$ term will force the solution to the diffusion equation to be non-negative, a property that is desired for the problem to be physically relevant. The perturbative argument we have in this work will implicitly ensure the non-negativity of the diffusion solution when we select appropriate point of linearization.

In the rest of the paper, we study the inverse problems in the transport regime in Section~\ref{SEC:Transport} and in the diffusion regime in Section~\ref{SEC:Diffusion}. Concluding remarks are offered in Section~\ref{SEC:Concl}. Throughout the paper, we assume that all the coefficient functions are bounded in $L^\infty(\Omega)$:
\begin{align}\label{EQ:Transport Coeff}
(a)\ \ 0 < c_0\leq \Xi (\bx), \sigma_a(\bx), \sigma_s(\bx), \sigma_b(\bx) \leq C_0, \qquad \forall \bx\in\Omega
\end{align}
for some positive constants $c_0$ and $C_0$. It is convenient in later discussion to extend these functions $\Xi (\bx), \sigma_a(\bx), \sigma_s(\bx), \sigma_b(\bx)$ by $0$ outside $\Omega$. For technical reasons, we assume further that
\begin{align}\label{EQ:Transport Coeff-1}
(b)\ \ \sigma_a\ \mbox{and}\ \sigma_s \ \mbox{are known in a $\delta$-vicinity of $\partial\Omega$ for some $\delta>0$},
\end{align}
which, in the diffusion approximation, translates to the assumption
\begin{align}\label{EQ:Transport Coeff-2}
(b')\ \ {\sigma_a}_{|\partial\Omega}\ \mbox{and}\  \gamma_{|\partial\Omega}\ \mbox{are known}.
\end{align}
While assumption $(b)$ (and therefore $(b')$) does not look harmful from the practical point of view, it is needed to ensure the correctness of the results we will present (see for instance~\cite{ReGaZh-SIAM13} for discussions on how to remove assumption $(b')$ in the diffusive regime by introducing additional data).

\section{Inverse problems in the radiative transport regime}
\label{SEC:Transport}

We start with inverse problems to the semilinear transport model~\eqref{EQ:ERT TP} with internal data of the form~\eqref{EQ:Data TP}. 
We denote by $L_{d\xi}^\infty(\Gamma_{-})$ the usual space of $L^\infty$ functions on $\Gamma_-$ with measure $d\xi=|\bnu(\bx) \cdot \bv|d\mu(\bx) d\bv$, $d\mu(\bx)$ being the surface Lebesgue measure on $\partial\Omega$.

Let us assume that we have the data encoded in the map:
\begin{equation}\label{EQ:Data Map}
\Lambda_T: g\in L_{d\xi}^{\infty}(\Gamma_{-})  \mapsto H_T\in L^\infty(\Omega).
\end{equation}
For any sufficiently small $g(\bx, \bv)\in L_{d\xi}^{\infty}(\Gamma_{-})$, the well-poseness result in Theorem~\ref{THM:WELL} ensures that there exists a unique solution $u$ to \eqref{EQ:ERT TP}. Therefore the map $\Lambda_T$ in~\eqref{EQ:Data Map} is well-defined for small $g$ in $L^{\infty}_{d\xi}(\Gamma_{-})$.

The inverse coefficient problem we are interested in solving is the following:\\[1.5ex]
{\bf Inverse Problem:} \emph{Determine the triplet $(\sigma_a, \sigma_b, \sigma_s)$ in~\eqref{EQ:ERT TP} from the data encoded in~$\Lambda_T$} defined in~\eqref{EQ:Data Map}.\\[1.5ex]
Note that theory developed in~\cite{ReZh-SIAM18} based on the diffusion approximation implies that one can not reconstruct all four coefficients $(\Xi, \sigma_a, \sigma_b, \sigma_s)$ simultaneously, no matter how much data we have. Therefore, we assume that $\Xi$ is known in the rest of the paper.

Our main strategy is to use the linearization technique of Isakov and others~\cite{Isakov-ARMA93,Isakov-CPDE01,Isakov-IP01,IsNa-TAMS95,IsSy-CPAM94} in dealing with nonlinear equations to decompose the inverse problem to the semilinear radiative transport equation~\eqref{EQ:ERT TP} into an inverse coefficient problem for the linear transport equation where we reconstruct $\sigma_a$ and $\sigma_s$ by the result of Bal-Jollivet-Jungon~\cite{BaJoJu-IP10}, and an inverse source problem for the linear transport equation where we reconstruct the two-photon absorption coefficient $\sigma_b$. This is the same type of strategy that have been successfully employed to solve many inverse problems for nonlinear PDEs recently; see for instance ~\cite{AsZh-JST20,Brander-PAMS16,CaKa-arXiv20,CaNaVa-AML19,ChLaOkPa, EgPiSc-IP14,FeOk-arXiv19,HaHyMu-IP18,HeSu-CPDE02,KaNa-IP02,KrUh-arXiv19,KrUh-PAMS20,KuLaUh, LaLi-arXiv20,LaiOhm21,LaUhYa-arXiv20,LaZh-arXiv20B,LaLiLiSa-arXiv19A,LaLiLiSa-arXiv19B,LaLiMaTy,LaUhWa18,MuUh-arXiv18,Shankar-arXiv19,Sun-MZ96,Sun-AAM04,SuUh-AJM97} and reference therein.  
\subsection{\texorpdfstring{$1^{st}$}{}-order linearization to recover \texorpdfstring{$\sigma_a$}{} and \texorpdfstring{$\sigma_s$}{}}

Let $\eps>0$ be a small parameter. We consider the following boundary value problem:
\begin{equation}\label{EQ:Semil ERT}
\begin{array}{rcll}
\bv \cdot \nabla u(\bx, \bv; \eps) + \sigma_a(\bx) u(\bx, \bv; \eps) + \sigma_b \aver{u} u(\bx, \bv; \eps) &=& \sigma_s(\bx) K (u)(\bx, \bv; \eps), &\text{in}\ X \\
u(\bx, \bv; \eps) &= & \eps g(\bx, \bv), &\text{on}\ \Gamma_{-}.
\end{array}
\end{equation}
For $g\in L_{d\xi}^\infty(\Gamma_-)$ and $\varepsilon$ sufficiently small, the boundary value problem~\eqref{EQ:Semil ERT} is well-posed according to Theorem~\ref{THM:WELL}. Moreover, the solution $u(\bx,\bv;\varepsilon)$ of \eqref{EQ:Semil ERT} satisfies $u(\bx,\bv;0)=0$ when $\varepsilon=0$ due to the well-posedness. We denote the associated data by $H_T(x;\varepsilon)$. 

Following Proposition~\ref{Pro:differentiability}, we know that $u$ is twice differentiable with respect to $\varepsilon$. Therefore, we can perform the following linearization.

Based on Proposition~\ref{Pro:differentiability}, let $u^{(1)}(\bx,\bv):=\partial_{\varepsilon}u(\bx,\bv;\varepsilon)|_{\varepsilon=0}$. By the first-order linearization, we have that $u^{(1)}$ satisfies the linear transport equation:
\begin{equation}\label{EQ:u_1 0}
\begin{array}{rcll}
\bv \cdot \nabla u^{(1)}(\bx, \bv) + \sigma_a(\bx) u^{(1)}(\bx, \bv) &=& \sigma_s(\bx) K (u^{(1)})(\bx, \bv), &\text{in}\ X \\
u^{(1)}(\bx, \bv) &= & g(\bx, \bv), &\text{on}\ \Gamma_{-}
\end{array}
\end{equation}
where we used the fact that $u(\bx,\bv;0)=0$.

For the internal data defined in \eqref{EQ:Data TP}, we also linearize it and then obtain that
\begin{equation}\label{linear internal 0}
H^{(1)}_T(\bx):=\partial_{\varepsilon}H_T(\bx;\varepsilon)|_{\varepsilon=0}=\Xi \sigma_a \aver{u^{(1)}}(\bx).
\end{equation}

It turns out that data encoded in the operator,
\begin{equation}\label{EQ:Albedo 1st}
	\Lambda_T^{(1)}: g(\bx,\bv)\in L_{d\xi}^\infty(\Gamma_-) \mapsto H^{(1)}_T\in L^\infty(\Omega),
\end{equation}
which is well-defined~\cite[Theorem 1.3]{BaChSc-SIAM16}, are sufficient to determine $\sigma_a$ and $\sigma_s$, under the assumption that $\Xi$ is known, according to a result of Bal-Jollivet-Jugnon~\cite{BaJoJu-IP10}.
\begin{proposition}[Theorem 2.6 of ~\cite{BaJoJu-IP10}]\label{prop:transport sigma a}
	Under the assumptions in~\eqref{EQ:Transport Coeff} and~\eqref{EQ:Transport Coeff-1}, the albedo operator $\Lambda_T^{(1)}$ uniquely determines $\sigma_a$ and $\sigma_s$ in $\Omega$, and the following stability holds:
	\[
		\|\sigma-\tilde\sigma\|_{W^{-1,1}(\Omega)}+\|\sigma_s-\wt\sigma_s\|_{L^1(\Omega)}\le \|\Lambda^{(1)}-\wt \Lambda^{(1)}\|_{\cL(L_{d\xi}^\infty(\Gamma_-); L^\infty(\Omega))}
	\]
	where $(\sigma:=\sigma_a+\sigma_s, \sigma_s)$ and  $(\wt\sigma:=\wt\sigma_a+\wt\sigma_s, \wt\sigma_s)$ are coefficients corresponding to $\Lambda^{(1)}$ and $\wt\Lambda^{(1)}$ respectively.
\end{proposition}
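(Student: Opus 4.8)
The statement is quoted verbatim as Theorem~2.6 of~\cite{BaJoJu-IP10}, so one may simply invoke that reference; what follows is the singular-decomposition argument that I would use to establish it. The plan is to reduce the claim to a quantitative analysis of the linear albedo-type operator $\Lambda_T^{(1)}$ by exploiting the singular structure of its Schwartz kernel in the source variables $(\bx_0,\bv_0)\in\Gamma_{-}$. First I would pass to the mild (integral) formulation of the linear transport equation~\eqref{EQ:u_1 0}: writing the free streaming-plus-attenuation operator $T_0u:=\bv\cdot\nabla u+\sigma u$ with $\sigma=\sigma_a+\sigma_s$ and treating the gain term $\sigma_s\int_{\bbS^{d-1}}\Theta(\bv,\bv')u(\bx,\bv')\,d\bv'$ as a source, one obtains $u^{(1)}=(I-\cA)^{-1}Jg$, where $J$ lifts the boundary datum by ballistic transport and $\cA$ is the single-scattering operator built from $T_0^{-1}$ and $K$. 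For small data the Neumann series $(I-\cA)^{-1}=\sum_{k\ge0}\cA^k$ converges and yields the canonical decomposition $u^{(1)}=u_0+u_1+\sum_{k\ge2}u_k$ into ballistic, singly scattered, and multiply scattered parts. Substituting into $H^{(1)}_T=\Xi\sigma_a\aver{u^{(1)}}$ and into~\eqref{EQ:Albedo 1st} expresses the kernel of $\Lambda_T^{(1)}$ as a sum of terms of strictly decreasing singularity.

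Next I would probe $\Lambda_T^{(1)}$ with boundary illuminations concentrating at $(\bx_0,\bv_0)\in\Gamma_{-}$. In the limit, the ballistic term $u_0$ is supported on the ray $\{\bx_0+t\bv_0\}$ and carries the attenuation factor $\exp(-\int_0^t\sigma(\bx_0+s\bv_0)\,ds)$, so the leading singularity of $\Xi\sigma_a\aver{u^{(1)}}$ exposes, along each chord of $\Omega$, the product $\Xi(\bx)\sigma_a(\bx)\exp(-\int\sigma)$. Reading this amplitude at interior points near the two endpoints of the chord---both lying in the $\delta$-vicinity of $\partial\Omega$ where $\Xi\sigma_a$ is known by~\eqref{EQ:Transport Coeff} and~\eqref{EQ:Transport Coeff-1}---and taking their ratio cancels the unknown interior prefactor and isolates $\exp(-\int\sigma)$ over the full chord, i.e.\ the X-ray transform $X\sigma$. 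Injectivity and stability of $X$ on the bounded convex $\Omega$ (for $d\ge2$) then determine $\sigma$; because extracting $\sigma$ requires a logarithm and an inversion of $X$, the natural stability norm is the weaker $W^{-1,1}(\Omega)$. With $\sigma$ recovered and $\Theta$ known, the next-order single-scattering singularity (supported on broken rays) carries the weight $\sigma_s(\by)\Theta$ at the scattering site $\by$, against now-known attenuations; since $\sigma_s$ appears as a multiplicative amplitude with no smoothing operator to invert, it is recovered pointwise in the stronger $L^1(\Omega)$ norm.

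The main obstacle is the separation of scales in this decomposition: one must show that the ballistic, singly scattered, and multiply scattered pieces occupy geometrically distinct, progressively more regular sets, so that the extraction of $X\sigma$ from the leading singularity and of $\sigma_s$ from the next is not polluted by the tail $\sum_{k\ge2}u_k$. This requires uniform bounds establishing that each additional scattering event gains regularity (via the smoothing properties of $K$ and of $(I-\cA)^{-1}$), together with the disentangling of the unknown interior amplitude $\Xi\sigma_a$ from the attenuation $\exp(-\int\sigma)$---which is exactly where the hypothesis that $\sigma_a,\sigma_s$ are known near $\partial\Omega$ is indispensable. Finally, the stability estimate follows by running the same decomposition for two coefficient sets $(\sigma,\sigma_s)$ and $(\wt\sigma,\wt\sigma_s)$ and bounding the differences of the extracted singular profiles by $\|\Lambda_T^{(1)}-\wt\Lambda_T^{(1)}\|_{\cL(L_{d\xi}^\infty(\Gamma_{-});L^\infty(\Omega))}$, the X-ray inversion step producing the $W^{-1,1}$ loss for $\sigma$ and the single-scattering step the $L^1$ control of $\sigma_s$.
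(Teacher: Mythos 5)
Your proposal is in essence the same as the paper's treatment, because the paper offers no proof of this proposition at all: it is imported wholesale from Bal--Jollivet--Jugnon, and your singular-decomposition sketch (Neumann series splitting into ballistic, single-scattering and multiple-scattering parts; probing with concentrating illuminations; extraction of the X-ray transform of $\sigma$ from the ballistic amplitude, with the knowledge of $\sigma_a,\sigma_s$ in a $\delta$-vicinity of $\partial\Omega$ cancelling the unknown interior prefactor $\Xi\sigma_a$; then $\sigma_s$ read off pointwise from the single-scattering weight, whence the asymmetric $W^{-1,1}$/$L^1$ norms) is a fair outline of how that reference actually proves it.

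One correction, however: the statement is \emph{not} quoted verbatim from Theorem 2.6 of \cite{BaJoJu-IP10}, and the discrepancy touches your argument. That theorem is proved with $\Lambda^{(1)}$ viewed as an operator $L^1_{d\xi}(\Gamma_-)\to L^1(\Omega)$, which is the natural framework for your concentrating sources: they approximate delta functions in $L^1$ (or in measures) but cannot be normalized in $L^\infty_{d\xi}(\Gamma_-)$, so the probing step as you describe it is not directly compatible with the $\cL\bigl(L^\infty_{d\xi}(\Gamma_-);L^\infty(\Omega)\bigr)$ norm appearing in the proposition; nor does an estimate against the $\cL(L^1;L^1)$ norm transfer to the $\cL(L^\infty;L^\infty)$ norm by any generic comparison of operator norms. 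The paper bridges exactly this gap in the remark following the proposition: the $L^\infty$ well-posedness theory of \cite[Theorem 2.1]{BaChSc-SIAM16} allows the singular-decomposition argument to be reproduced in the $L^\infty_{d\xi}(\Gamma_-)\to L^1(\Omega)$ framework, and the embedding $L^\infty(\Omega)\hookrightarrow L^1(\Omega)$ on the bounded domain then yields the operator norm as stated. Your sketch would need this same adjustment to deliver the proposition in the form given here.
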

We refer interested reader to~\cite{BaJoJu-IP10} for the a more general version of this result as well as several other related stability results.
\begin{remark}\rm
Theorem 2.6 of ~\cite{BaJoJu-IP10} was derived under the framework where $\Lambda_T^{(1)}$ is viewed as an operator $\Lambda_T^{(1)}: L_{d\xi}^1(\Gamma_-) \mapsto L^1(\Omega)$. With the assumptions we have, the result in~\cite[Theorem 2.1]{BaChSc-SIAM16} ensures the well-posedness of the linear transport equation~\eqref{EQ:u_1 0} in the $L_{d\xi}^\infty(\Gamma_-) \mapsto L^\infty(X)$ framework. This allows Theorem 2.6 of ~\cite{BaJoJu-IP10} to be reproduced in the $\Lambda_T^{(1)}: L_{d\xi}^\infty(\Gamma_-) \mapsto L^1(\Omega)$ framework which then leads to Proposition~\ref{prop:transport sigma a} by standard bounds.
\end{remark}

\subsection{$2^{nd}$-order linearization to recover $\sigma_b$}

We now differentiate~\eqref{EQ:Semil ERT} twice with respect to $\varepsilon$, and obtain that   
\begin{equation}\label{EQ:ERT TP eps2}
\begin{array}{rcll}
\bv \cdot \nabla u^{(2)}(\bx, \bv) + \sigma_a(\bx) u^{(2)}(\bx, \bv) + 2 \sigma_b \aver{u^{(1)}} u^{(1)}(\bx,\bv) &=& \sigma_s(\bx) K u^{(2)}(\bx, \bv), &\text{in}\ X\\
u^{(2)}(\bx, \bv) &= & 0, &\text{on}\ \Gamma_{-}
\end{array}
\end{equation}
where the solution $u^{(2)}(\bx,\bv):=\partial_{\varepsilon}^2 u(\bx,\bv;\varepsilon)|_{\varepsilon=0}$ and $\sigma_b$ is the only to-be-recovered coefficient.
Similarly, the internal data is linearized to the second order, that is,
\begin{equation}\label{EQ:Data eps2}
H^{(2)}_T(\bx):=\partial_{\varepsilon}^2 H_T(\bx,\bv)|_{\varepsilon=0}=\Xi\Big(\sigma_a \aver{u^{(2)}}+ 2\sigma_b \aver{u^{(1)}}\aver{u^{(1)}} \Big)(\bx).
\end{equation}

From Proposition~\ref{prop:transport sigma a}, we have determined $\sigma$ and $\sigma_s$ from the first-order term in linearization. 
It remains to recover $\sigma_b$. Let $u$ and $\wt{u}$ be solutions to~\eqref{EQ:Semil ERT} with coefficients $(\sigma_a,\sigma_b,\sigma_s)$ and $(\sigma_a,\wt\sigma_b, \sigma_s)$ respectively. We denote the corresponding data by $H_T$ and $\wt H_T$. Then we have that
$
u^{(1)} = \wt{u}^{(1)}
$
and $ u^{(2)}$ and $\wt{u}^{(2)}$ are solutions to \eqref{EQ:ERT TP eps2} with $\sigma_b$ and $\wt\sigma_b$, respectively.

For any coefficient and data pair $(\sigma_a, \sigma_b, H_T)$, we define
\[
\cA_1:=\left\{(\sigma_a, \sigma_b, H_T)\mid \ \inf_\Omega \Big(\sigma_a+\bv\cdot\nabla \ln\dfrac{H_T^{(1)}}{{\Xi}\sigma_a}\Big)\ge \alpha>0\right\}
\]
for some positive constant $\alpha$, and also define
\[
\cA_2:=\{(\sigma_a, \sigma_b, H_T)\mid 0\leq \Pi  < 1 \},
\]	
where we denote
\[
\Pi :=  C_2 C_0 \|\dfrac{\Xi\sigma_a g}{H_T^{(1)}} \|_{L_{d\xi}^\infty(\Gamma_-)}\,,
\]
with the constant $C_2$ defined in Proposition~\ref{PRO:source L2} and the constant $C_0$ defined in \eqref{EQ:Transport Coeff}. 

Note that in Proposition~\ref{PRO:positive}, for suitable chosen $g\in L_{d\xi}^\infty(\Gamma_-)$, there exists a unique positive solution $u^{(1)}$ to \eqref{EQ:u_1 0} such that $u^{(1)}\geq \varepsilon'>0$ for some constant $\varepsilon'>0$ depending on $g,\Omega,\sigma_a,\sigma_s$.
We now let $\varphi=\dfrac{u^{(1)}}{\aver{u^{(1)}}}$. Then $\varphi$ solves the following transport equation:
\begin{equation*}
\begin{array}{rcll}
\bv \cdot \nabla \varphi + (\sigma_a+\bv\cdot\nabla \ln \aver{u^{(1)}})\varphi &=&\sigma_s K \varphi(\bx, \bv),  &\text{in}\ X\\
\varphi(\bx, \bv) &= &\dfrac{\Xi \sigma_a g}{H_T^{(1)}}, &\text{on}\ \Gamma_{-}\,.
\end{array}
\end{equation*}
 
\begin{lemma}\label{lemma:max bound}
	If $(\sigma_a, \sigma_b, H_T)\in \mathcal{A}_1$, then
	\begin{equation*}
	\|\varphi\|_{L^\infty(X)}\le\|\dfrac{\Xi \sigma_a g}{H_T^{(1)}}\|_{L_{d\xi}^\infty(\Gamma_-)}\,.
	\end{equation*}
\end{lemma}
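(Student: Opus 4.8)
The plan is to read the equation for $\varphi$ as a \emph{linear} transport equation whose effective total absorption is
\[
q(\bx,\bv) := \sigma_a(\bx) + \bv\cdot\nabla\ln\aver{u^{(1)}}(\bx),
\]
and to recognize that membership in $\mathcal{A}_1$ is \emph{exactly} the statement $q\ge\alpha>0$, because $H_T^{(1)}/(\Xi\sigma_a)=\aver{u^{(1)}}$. Once this is in place the claim is a maximum principle in which the positive absorption $q$ dominates the in-scattering. I first note that Proposition~\ref{PRO:positive} gives $u^{(1)}\ge\varepsilon'>0$, so $\varphi=u^{(1)}/\aver{u^{(1)}}$ is a well-defined \emph{nonnegative} element of $L^\infty(X)$; hence $\|\varphi\|_{L^\infty(X)}=\sup_X\varphi$ and it suffices to bound $\varphi$ from above by $M:=\|\Xi\sigma_a g/H_T^{(1)}\|_{L_{d\xi}^\infty(\Gamma_-)}$.

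Next I would simplify the collision term. By the normalization $\int_{\bbS^{d-1}}\Theta(\bv',\bv)\,d\bv'=1$, the loss part of $K$ collapses to $-\varphi$, so the equation for $\varphi$ reads
\[
\bv\cdot\nabla\varphi + (q+\sigma_s)\varphi = \sigma_s\!\int_{\bbS^{d-1}}\!\Theta(\bv,\bv')\varphi(\bx,\bv')\,d\bv',\qquad \varphi|_{\Gamma_-}=\frac{\Xi\sigma_a g}{H_T^{(1)}}.
\]
Passing to the mild formulation along the characteristic $s\mapsto \bx-s\bv$, with backward exit time $\tau=\tau(\bx,\bv)$ (so that $(\bx-\tau\bv,\bv)\in\Gamma_-$) and attenuation $E(s):=\exp\big(-\int_0^s(q+\sigma_s)(\bx-r\bv)\,dr\big)$, I obtain
\[
\varphi(\bx,\bv)=E(\tau)\,\frac{\Xi\sigma_a g}{H_T^{(1)}}(\bx-\tau\bv,\bv)+\int_0^\tau E(s)\,\sigma_s(\bx-s\bv)\!\int_{\bbS^{d-1}}\!\Theta(\bv,\bv')\varphi(\bx-s\bv,\bv')\,d\bv'\,ds.
\]

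Now I estimate. Since $\Theta\ge0$ and $\int_{\bbS^{d-1}}\Theta(\bv,\bv')\,d\bv'=1$, the inner average is at most $N:=\|\varphi\|_{L^\infty(X)}$, and the boundary factor is at most $M$. The decisive use of $\mathcal{A}_1$ is the sign $q\ge0$ (indeed $q\ge\alpha$), which gives $\sigma_s\le q+\sigma_s$ and therefore
\[
\int_0^\tau E(s)\,\sigma_s(\bx-s\bv)\,ds\le\int_0^\tau E(s)\,(q+\sigma_s)(\bx-s\bv)\,ds=1-E(\tau),
\]
the last equality because $E'(s)=-(q+\sigma_s)(\bx-s\bv)E(s)$. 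Combining the two bounds yields the pointwise convex-combination estimate
\[
\varphi(\bx,\bv)\le M\,E(\tau)+N\,\big(1-E(\tau)\big)=N-E(\tau)\,(N-M).
\]
If $N\le M$ we are done; otherwise $N-M>0$ and, since $E(\tau)\ge\theta_*:=\exp(-\|q+\sigma_s\|_{L^\infty}\,\mathrm{diam}\,\Omega)>0$, taking the supremum over $(\bx,\bv)$ gives $N\le N-\theta_*(N-M)$, i.e.\ $N\le M$, which is precisely the asserted bound.

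The step I expect to require the most care is ensuring the boundary contribution does not degenerate, i.e.\ that $\theta_*>0$; this needs $q+\sigma_s$ bounded \emph{above}, which for $q$ amounts to Lipschitz control of $\ln\aver{u^{(1)}}$ (regularity of the velocity-averaged solution) rather than anything supplied by $\mathcal{A}_1$, whose role is solely the lower sign bound $q\ge\alpha>0$. Everything else is the standard absorption-dominates-scattering maximum principle, with the two normalizations of $\Theta$ being exactly what makes the in-scattering a genuine average and collapses the loss term to $-\varphi$.
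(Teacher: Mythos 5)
Your proof is correct, but it is genuinely different in character from the paper's: the paper's entire proof is a one-line appeal to the maximum principle of Proposition~\ref{PRO:source L2} (the $p=\infty$ case with $S\equiv 0$ and constant $\wt c =1$), applied to the $\varphi$-equation after observing, exactly as you do, that $\cA_1$ means the effective absorption $q=\sigma_a+\bv\cdot\nabla\ln\aver{u^{(1)}}$ is bounded below by $\alpha>0$. You instead reprove that maximum principle by hand: Duhamel along characteristics, the convex-combination estimate $\varphi\le M E(\tau)+N(1-E(\tau))$, and the strict-contraction argument via $E(\tau)\ge\theta_*>0$. What your route buys is transparency (it is self-contained and shows that only the sign $q\ge 0$ is used, not the quantitative $\alpha$); what the paper's route buys is brevity and delegation of the measure-theoretic details to the cited $L^\infty$ theory. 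One improvement to your own argument: the step you flag as delicate --- the positive lower bound $\theta_*$ on $E(\tau)$, which you say requires Lipschitz control of $\ln\aver{u^{(1)}}$ --- can be dispensed with, because the $\bv$-dependent part of $q$ integrates \emph{exactly} along characteristics,
\begin{equation*}
\int_0^{\tau}\bv\cdot\nabla\ln\aver{u^{(1)}}(\bx-r\bv)\,dr \;=\; \ln\frac{\aver{u^{(1)}}(\bx)}{\aver{u^{(1)}}(\bx-\tau\bv)},
\end{equation*}
so that $E(\tau)\ge e^{-d_\Omega(\overline{\sigma_a}+\overline{\sigma_s})}\,\aver{u^{(1)}}(\bx-\tau\bv)/\aver{u^{(1)}}(\bx)$, and the ratio is bounded below by $\eps'/\|g\|_{L_{d\xi}^\infty(\Gamma_-)}>0$ using Proposition~\ref{PRO:positive} and the a priori $L^\infty$ bound from Proposition~\ref{PRO:source L2}; no regularity of $\aver{u^{(1)}}$ beyond what $\cA_1$ already presupposes is needed. (The same implicit regularity question --- that $q$ be an admissible absorption coefficient --- is present in the paper's one-line invocation of Proposition~\ref{PRO:source L2} as well, so your version is, if anything, the more careful one.)
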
  

\begin{proof}
	Since $(\sigma_a, \sigma_b, H_T)\in \mathcal{A}_1$, the proof follows immediately from the maximum principle; see for instance Proposition~\ref{PRO:source L2}.
\end{proof}

We are ready to determine $\sigma_b$ provided that $(\Xi,\sigma_a,\sigma_s)$ is known. More precisely, we have the following result.
	\begin{theorem}
		Let $H_T$ and $\wt H_T$ be the internal data corresponding to the coefficient sets $(\Xi,\sigma_a,\sigma_b,\sigma_s)$ and $(\Xi, \sigma_a,\wt\sigma_b, \sigma_s)$, both satisfying~\eqref{EQ:Transport Coeff}, respectively. Assume that the coefficient-datum pairs $(\sigma_a, \sigma_b, H_T)$ and $(\sigma_a, \wt \sigma_b, \wt H_T)$ are both in the class of $\cA_1\cap\cA_2$. Then $\sigma_b$ and $\wt \sigma_b$ can be reconstructed from $H^{(2)}_T$ and $\wt H^{(2)}_T$, that is, 
		\begin{equation}\label{EQ:Stab}
		\|(\sigma_b-\wt\sigma_b)\aver{u^{(1)}} u^{(1)}\|_{L^2(X)}\le C \|H^{(2)}_T-\wt H^{(2)}_T\|_{L^2(\Omega)}
		\end{equation}
		for some constant $C=\frac{ 1}{2 c_0 (1- \Pi)} \|\dfrac{\Xi\sigma_a g}{H_T^{(1)}} \|_{L_{d\xi}^\infty(\Gamma_-)} \geq 0$.
		
		Moreover, due to the positive lower bound of $u^{(1)}$, we have
		\begin{equation}\label{EQ:Stab sigma_b}
		\|\sigma_b-\wt\sigma_b\|_{L^2(\Omega)}\le C \|H^{(2)}_T-\wt H^{(2)}_T\|_{L^2(\Omega)}.
		\end{equation}
	\end{theorem}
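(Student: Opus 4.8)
The plan is to reduce the recovery of $\sigma_b$ to an inverse source problem for the linear transport equation, and then to close a self-improving estimate using the two structural classes $\cA_1$ and $\cA_2$. Since the two coefficient sets share the same $\sigma_a$ and $\sigma_s$, Proposition~\ref{prop:transport sigma a} gives $u^{(1)}=\wt u^{(1)}$, so both second-order profiles solve~\eqref{EQ:ERT TP eps2} with the \emph{same} $u^{(1)}$. Setting $w:=u^{(2)}-\wt u^{(2)}$ and subtracting the two copies of~\eqref{EQ:ERT TP eps2}, I find that $w$ solves the linear transport equation
\[
\bv\cdot\nabla w + \sigma_a w - \sigma_s K w = Q:=-2(\sigma_b-\wt\sigma_b)\aver{u^{(1)}} u^{(1)}, \qquad w|_{\Gamma_-}=0 .
\]
Introducing the $\bx$-only amplitude $m:=2(\sigma_b-\wt\sigma_b)\aver{u^{(1)}}^2$ and recalling $\varphi=u^{(1)}/\aver{u^{(1)}}$, the factorization $u^{(1)}=\aver{u^{(1)}}\varphi$ gives $Q=-m\varphi$ and $(\sigma_b-\wt\sigma_b)\aver{u^{(1)}}u^{(1)}=\tfrac12 m\varphi$, which is precisely the quantity estimated on the left of~\eqref{EQ:Stab}.

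The key is that the data possesses the same algebraic structure. Using $u^{(1)}=\wt u^{(1)}$ in~\eqref{EQ:Data eps2},
\[
H^{(2)}_T-\wt H^{(2)}_T = \Xi\sigma_a\aver{w} + 2\Xi(\sigma_b-\wt\sigma_b)\aver{u^{(1)}}^2 = \Xi\big(\sigma_a\aver{w}+m\big),
\]
so that $m = \Xi^{-1}(H^{(2)}_T-\wt H^{(2)}_T) - \sigma_a\aver{w}$. The idea is then to show that the angular-average term $\sigma_a\aver{w}$ is controlled by a fraction of $\|m\|_{L^2(\Omega)}$, so that it can be absorbed into the left-hand side.

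To bound $\sigma_a\aver{w}$, I first use Cauchy--Schwarz in $\bv$ (with $\int_{\bbS^{d-1}}d\bv=1$) to get $\|\aver{w}\|_{L^2(\Omega)}\le\|w\|_{L^2(X)}$, together with $\sigma_a\le C_0$. The $L^2$ stability estimate for the sourced transport equation (Proposition~\ref{PRO:source L2}, whose constant $C_2$ is available precisely because $(\sigma_a,\sigma_b,H_T)\in\cA_1$ renders the effective absorption $\sigma_a+\bv\cdot\nabla\ln\aver{u^{(1)}}$ positive) yields $\|w\|_{L^2(X)}\le C_2\|Q\|_{L^2(X)}$. Since $Q=-m\varphi$ with $m$ independent of $\bv$, Lemma~\ref{lemma:max bound} controls $\|\varphi\|_{L^\infty(X)}$ and hence $\|Q\|_{L^2(X)}\le\|\varphi\|_{L^\infty(X)}\|m\|_{L^2(\Omega)}\le\|\tfrac{\Xi\sigma_a g}{H_T^{(1)}}\|_{L_{d\xi}^\infty(\Gamma_-)}\|m\|_{L^2(\Omega)}$. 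Chaining these bounds gives $\|\sigma_a\aver{w}\|_{L^2(\Omega)}\le \Pi\,\|m\|_{L^2(\Omega)}$ with exactly the $\Pi$ of class $\cA_2$. Because $\Pi<1$, applying the triangle inequality to $m=\Xi^{-1}(H^{(2)}_T-\wt H^{(2)}_T)-\sigma_a\aver{w}$ with $\Xi\ge c_0$ produces $(1-\Pi)\|m\|_{L^2(\Omega)}\le c_0^{-1}\|H^{(2)}_T-\wt H^{(2)}_T\|_{L^2(\Omega)}$. Feeding this into $\|(\sigma_b-\wt\sigma_b)\aver{u^{(1)}}u^{(1)}\|_{L^2(X)}=\tfrac12\|m\varphi\|_{L^2(X)}\le\tfrac12\|\varphi\|_{L^\infty}\|m\|_{L^2(\Omega)}$ yields~\eqref{EQ:Stab} with the stated $C$. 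For~\eqref{EQ:Stab sigma_b}, I invoke the uniform lower bound $u^{(1)}\ge\varepsilon'>0$ from Proposition~\ref{PRO:positive} (so $\aver{u^{(1)}}\ge\varepsilon'$ and $\aver{(u^{(1)})^2}\ge(\varepsilon')^2$), which gives $\|(\sigma_b-\wt\sigma_b)\aver{u^{(1)}}u^{(1)}\|_{L^2(X)}\ge(\varepsilon')^2\|\sigma_b-\wt\sigma_b\|_{L^2(\Omega)}$ and hence the final estimate after rescaling the constant by $(\varepsilon')^{-2}$.

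The main obstacle is the absorption step, i.e.\ establishing $\|\sigma_a\aver{w}\|_{L^2(\Omega)}\le\Pi\,\|m\|_{L^2(\Omega)}$ with $\Pi<1$; this is what decouples the unknown amplitude $m$ from the induced perturbation $\aver{w}$ of the second-order profile. It is exactly here that both hypotheses are indispensable: $\cA_1$ to guarantee the maximum-principle/$L^2$ bound of Proposition~\ref{PRO:source L2} and the finiteness of $\|\varphi\|_{L^\infty}$ in Lemma~\ref{lemma:max bound}, and $\cA_2$ to ensure $1-\Pi>0$ so that the estimate can be closed. The factorization $Q=-m\varphi$, which separates the $\bx$-dependent amplitude $m$ from the bounded angular profile $\varphi$, is the device that makes the constant emerge exactly as claimed.
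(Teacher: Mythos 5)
Your proof is correct and follows essentially the same route as the paper: both rest on the observation that $w=u^{(2)}-\wt u^{(2)}$ solves the linear transport equation with source $-2(\sigma_b-\wt\sigma_b)\aver{u^{(1)}}u^{(1)}$, then combine the $L^2$ source-to-solution bound of Proposition~\ref{PRO:source L2}, the bound $\|\varphi\|_{L^\infty(X)}\le \|\Xi\sigma_a g/H_T^{(1)}\|_{L^\infty_{d\xi}(\Gamma_-)}$ from Lemma~\ref{lemma:max bound} under $\cA_1$, Jensen's inequality for $\aver{\cdot}$, and absorption via $\Pi<1$ from $\cA_2$ — your only cosmetic difference is closing the estimate at the level of the $\bx$-only amplitude $m=2(\sigma_b-\wt\sigma_b)\aver{u^{(1)}}^2$ instead of the $L^2(X)$ norm of the full product, which produces the identical constant, and your explicit note that the constant in \eqref{EQ:Stab sigma_b} rescales by $(\varepsilon')^{-2}$ is if anything more careful than the paper's. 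One minor attribution slip worth fixing: Proposition~\ref{PRO:source L2} applied to $w$'s equation requires only the assumptions \eqref{EQ:Transport Coeff} on $\sigma_a,\sigma_s$, not membership in $\cA_1$ — the class $\cA_1$ enters solely through the maximum-principle bound on $\varphi$ (whose equation carries the modified absorption $\sigma_a+\bv\cdot\nabla\ln\aver{u^{(1)}}$) in Lemma~\ref{lemma:max bound}.
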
	

\begin{proof}
	From the data~\eqref{EQ:Data eps2} and the fact that $u^{(2)}$, and $\wt{u}^{(2)}$ are solutions to \eqref{EQ:ERT TP eps2} with the same $\sigma_a$, we have that
	\begin{align}\label{EQ:Sigmab Stab 1}
	&	\|2(\sigma_b-\wt\sigma_b) \aver{u^{(1)}} u^{(1)} \|_{L^2(X)} \notag \\
	& \le \|{u^{(1)} \over \aver{u^{(1)}}}\|_{L^\infty(X)} \|\frac{H_T^{(2)}-\wt H_T^{(2)}}{\Xi}\|_{L^2(\Omega)}+\| \sigma_a {u^{(1)} \over \aver{u^{(1)}}}  (\aver{u^{(2)}}-\aver{\wt u^{(2)}})\|_{L^2(X)}\notag\\
	&\le  {1\over c_0}\|{u^{(1)} \over \aver{u^{(1)}}}\|_{L^\infty(X)}   \|H_T^{(2)}-\wt H_T^{(2)}\|_{L^2(\Omega)}+\| \sigma_a{u^{(1)} \over \aver{u^{(1)}}}\|_{L^\infty(X)}\|\aver{u^{(2)}-\wt u^{(2)}}\|_{L^2(\Omega)}.
	\end{align}
	We observe also that, for any $\phi(\bx, \bv) \in L^2(X)$, by Jensen's inequality, we have that
	\begin{align}\label{EQ:General Bound}
	\|\aver{\phi}\|_{L^2(\Omega)}^2 
	\le  \| \phi\|_{L^2(X)}^2.
	\end{align}	
	Therefore, ~\eqref{EQ:Sigmab Stab 1} can be written as
	\begin{align}\label{EQ:Sigmab Stab 2}
	&\|2(\sigma_b-\wt\sigma_b) \aver{u^{(1)}} u^{(1)}\|_{L^2(X)} \notag\\ 
	&\le  {1\over c_0} \|{u^{(1)} \over \aver{u^{(1)}}}\|_{L^\infty(X)}  \|H_T^{(2)}-\wt H_T^{(2)}\|_{L^2(\Omega)}+\| \sigma_a {u^{(1)} \over \aver{u^{(1)}}} \|_{L^\infty(X)} \|u^{(2)}-\wt u^{(2)}\|_{L^2(X)}.
	\end{align}
	
	Let $w=u^{(2)}-\wt u^{(2)}$. We verify that $w$  solves the transport equation
	\begin{equation*}
	\begin{array}{rcll}
	\bv \cdot \nabla w(\bx, \bv) + \sigma_a(\bx) w &=& \sigma_s(\bx) K w-2(\sigma_b-\wt\sigma_b)\aver{u^{(1)}} u^{(1)},  &\text{in}\ X\\
	w(\bx, \bv) &= & 0,  &\text{on}\ \Gamma_{-}\,.
	\end{array}
	\end{equation*}
	Therefore, we have that, for some constant $C_2>0$ in Proposition~\ref{PRO:source L2},
	\begin{equation}\label{EQ:Sigmab Stab 3}
	\|w\|_{L^2(X)}\le C_2\|2(\sigma_b-\wt\sigma_b)\aver{u^{(1)}} u^{(1)}\|_{L^2(X)}.
	\end{equation}
	By Lemma~\ref{lemma:max bound}, we have
	$$
	C_2\| \sigma_a {u^{(1)} \over \aver{u^{(1)}}} \|_{L^\infty(X)} \leq C_2 C_0  \|\dfrac{\Xi\sigma_a g}{H_T^{(1)}}  \|_{L_{d\xi}^\infty(\Gamma_-)}=\Pi < 1
	$$
	provided that $(\sigma_a, \sigma_b, H_T)\in \mathcal{A}_2$.
	Hence,~\eqref{EQ:Sigmab Stab 2} and~\eqref{EQ:Sigmab Stab 3} lead to 
	\begin{equation*}
	\|(\sigma_b-\wt\sigma_b) \aver{u^{(1)}} u^{(1)}\|_{L^2(X)} \le \frac{1}{2  c_0  (1- \Pi)} \|\dfrac{\Xi\sigma_a g}{H_T^{(1)}} \|_{L_{d\xi}^\infty(\Gamma_-)} \|H_T^{(2)}-\wt H_T^{(2)}\|_{L^2(\Omega)}.
	\end{equation*}	 
	This completes the proof.
\end{proof}

\begin{remark}
	To reconstruct $\sigma_b$, we have to make the assumption that the coefficient-datum triplet $(\sigma_a, \sigma_b, H_T)$ satisfies the constraints in $\cA_1$ and $\cA_2$. We do not have a precise characterization of the coefficients and the boundary conditions needed to make the constraints realizable at the moment. However, in the linearization technique, we reconstruct $(\sigma_a, \sigma_s)$ before we reconstruct $\sigma_b$. With $(\sigma_a, \sigma_s)$ known, it seems possible to select boundary conditions, following the constructions of~\cite{BaChSc-SIAM16}, to have the transport solution $u^{(1)}$ with small gradient relative to its size  so that $\cA_1$,  which is equivalent to $\inf_\Omega \big(\sigma_a+\bv\cdot\nabla \ln\aver{u^{(1)}} \big)\ge \alpha>0$,  is achievable. In the regime of practical applications, we have $\sigma_a\ll \sigma_s$. In such a case, $\cA_2$ roughly simplifies to $\frac{C_0}{c_0}\|\frac{g}{\aver{g}+\aver{u^{(1)}_{\Gamma_+}}}\|_{L_{d\xi}^\infty(\Gamma_-)}<1$. This might be achievable when the contrast of $\sigma_a$ is small (that is, $C_0/c_0$ is close to $1$), in which case we try to select isotropic boundary sources that generate solutions with large outgoing component, $u^{(1)}_{|\Gamma_+}$, on the boundary. It is of great interest, both on the technical and on the practical aspects, to see if one can find methods to relax (or even remove) the assumptions $\cA_1$ and $\cA_2$.
\end{remark}

\subsection{A result on uncertainty quantification in transport regime}

Our result in the previous section allows us to reconstruct all three coefficient $\sigma_a$, $\sigma_b$ and $\sigma_s$ when we have data encoded in the full operator $\Lambda_T$. In practical applications, one might only have a limited number of data sets to use. In such cases, it is not realistic trying to reconstruct all the coefficients. In many biological imaging applications, the absorption coefficients are of great interests since they are very sensitive to pathological changes in tissues while the scattering coefficient $\sigma_s$ is much less sensitive. One therefore often tries to reconstruct $\sigma_a$ and $\sigma_b$ assuming $\sigma_s$ is known. An important issue in this approach is to characterize the impact of the inaccuracy in the value of $\sigma_s$ on the reconstruction of $(\sigma_a, \sigma_b)$. In the next theorem, we give a sensitivity result for such an uncertainty quantification issue.

\begin{theorem}\label{THM:Uncertain ERT}
		Let $(\sigma_a, \sigma_b)$ and $(\wt\sigma_a, \wt\sigma_b)$ be reconstructed with $\sigma_s$ and $\wt\sigma_s$ respectively, from the same data set $H_T$. Assume that the coefficient data pairs $(\sigma_a, \sigma_b, H_T)$ and $(\wt \sigma_a, \wt \sigma_b, H_T)$ are both in the class of $\cA_1\cap\cA_2$. Then we have that,
		\begin{equation}\label{EQ:Uncertainty ERT}
		\|\sigma_a-\wt\sigma_a\|_{L^2(\Omega)}+\|\sigma_b-\wt\sigma_b\|_{L^2(\Omega)} \le \fc \|\sigma_s-\wt\sigma_s\|_{L^2(\Omega)}
		\end{equation}
		for some constant $\fc>0$.
\end{theorem}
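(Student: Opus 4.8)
The plan is to mirror the two-step linearization used to prove the preceding theorem, but now tracking the discrepancies created by the two different scattering coefficients $\sigma_s$ and $\wt\sigma_s$. Since both coefficient sets reproduce the same internal datum $H_T$ for the fixed illumination $g$, the first- and second-order data \eqref{linear internal 0} and \eqref{EQ:Data eps2} must coincide for the two systems. Writing $w^{(1)}:=u^{(1)}-\wt u^{(1)}$ and $w^{(2)}:=u^{(2)}-\wt u^{(2)}$, the equality of the first-order data $\Xi\sigma_a\aver{u^{(1)}}=\Xi\wt\sigma_a\aver{\wt u^{(1)}}$ gives the pointwise identity $(\sigma_a-\wt\sigma_a)\aver{u^{(1)}}=-\wt\sigma_a\aver{w^{(1)}}$, while the equality of the second-order data yields
\begin{equation*}
2(\sigma_b-\wt\sigma_b)\aver{u^{(1)}}^2=-(\sigma_a-\wt\sigma_a)\aver{\wt u^{(2)}}-\sigma_a\aver{w^{(2)}}-2\wt\sigma_b\big(\aver{u^{(1)}}+\aver{\wt u^{(1)}}\big)\aver{w^{(1)}}.
\end{equation*}
These two identities are the backbone of the argument.

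First I would handle $\sigma_a$. Subtracting the two copies of \eqref{EQ:u_1 0} shows that $w^{(1)}$ solves the linear transport equation with vanishing incoming data and source $-(\sigma_a-\wt\sigma_a)\wt u^{(1)}+(\sigma_s-\wt\sigma_s)K(\wt u^{(1)})$. Applying the $L^2$ source-to-solution stability of Proposition~\ref{PRO:source L2} bounds $\|w^{(1)}\|_{L^2(X)}$ by the $L^2$ norm of this source, which is in turn controlled by $\|\wt u^{(1)}\|_{L^\infty(X)}\|\sigma_a-\wt\sigma_a\|_{L^2(\Omega)}+\|K\wt u^{(1)}\|_{L^\infty(X)}\|\sigma_s-\wt\sigma_s\|_{L^2(\Omega)}$ (the normalization $\int_{\bbS^{d-1}}d\bv=1$ converts the $\bx$-only factors into $L^2(\Omega)$ norms). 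The first-order data identity together with Jensen's inequality \eqref{EQ:General Bound} and the lower bound $u^{(1)}\ge\varepsilon'>0$ from Proposition~\ref{PRO:positive} gives $\varepsilon'\|\sigma_a-\wt\sigma_a\|_{L^2(\Omega)}\le C_0\|w^{(1)}\|_{L^2(X)}$. Combining the two estimates and absorbing the self-referential $\|\sigma_a-\wt\sigma_a\|_{L^2(\Omega)}$ term on the right (valid in the admissible regime where $C_0 C_2\|\wt u^{(1)}\|_{L^\infty(X)}<\varepsilon'$) produces $\|\sigma_a-\wt\sigma_a\|_{L^2(\Omega)}+\|w^{(1)}\|_{L^2(X)}\le C\|\sigma_s-\wt\sigma_s\|_{L^2(\Omega)}$.

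Next I would recover $\sigma_b$, following the proof of the preceding theorem. Subtracting the two copies of \eqref{EQ:ERT TP eps2} shows $w^{(2)}$ solves the linear transport equation with zero incoming data and source
\begin{equation*}
-(\sigma_a-\wt\sigma_a)\wt u^{(2)}+(\sigma_s-\wt\sigma_s)K(\wt u^{(2)})-2(\sigma_b-\wt\sigma_b)\aver{u^{(1)}}u^{(1)}-2\wt\sigma_b\big(\aver{u^{(1)}}w^{(1)}+\aver{w^{(1)}}\wt u^{(1)}\big).
\end{equation*}
Multiplying the second-order data identity by $\varphi=u^{(1)}/\aver{u^{(1)}}$, taking $L^2(X)$ norms and invoking Lemma~\ref{lemma:max bound}, the quantity $\|(\sigma_b-\wt\sigma_b)\aver{u^{(1)}}u^{(1)}\|_{L^2(X)}$ reappears on the right multiplied by $C_2 C_0\|\varphi\|_{L^\infty(X)}\le\Pi<1$ (this is exactly where membership in $\cA_2$ enters), so it can be absorbed. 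All remaining terms on the right are linear in $\sigma_a-\wt\sigma_a$, $w^{(1)}$ and $w^{(2)}$, each already bounded by $\|\sigma_s-\wt\sigma_s\|_{L^2(\Omega)}$ through Stage 1 and the source estimate for $w^{(2)}$; the lower bound $u^{(1)}\ge\varepsilon'$ then converts the weighted norm into $\|\sigma_b-\wt\sigma_b\|_{L^2(\Omega)}$. Adding the two stages yields \eqref{EQ:Uncertainty ERT}.

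The main obstacle is the nonlinear coupling in Stage 2: the second-order residual $w^{(2)}$ depends on the very unknown $\sigma_b-\wt\sigma_b$ that we are trying to estimate, so a naive source bound is circular. This is resolved precisely by the contraction condition $\Pi<1$ built into $\cA_2$, which lets us absorb the offending term, the same mechanism that made the reconstruction theorem work. A secondary technical point is the analogous absorption in Stage 1, which requires the reconstructed first-order solution $u^{(1)}$ to have upper-to-lower amplitude ratio small enough relative to $C_0 C_2$; this is the first-order counterpart of $\cA_2$ and should be read as part of the admissible regime in which the constant $\fc$ is finite.
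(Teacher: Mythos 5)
Your overall architecture---two linearization stages, the differences $w^{(1)},w^{(2)}$ solving linear transport problems with sources, the source-to-solution estimate of Proposition~\ref{PRO:source L2}, and the $\Pi<1$ absorption for the $\sigma_b$-weighted term---matches the paper, and your Stage 2 (the second-order data identity and the source decomposition for $w^{(2)}$) is essentially the paper's argument verbatim. The genuine gap is in Stage 1, exactly at the step you dismiss as ``a secondary technical point.'' You strip the weight early, passing from $(\sigma_a-\wt\sigma_a)\aver{u^{(1)}}=-\wt\sigma_a\aver{w^{(1)}}$ to $\varepsilon'\|\sigma_a-\wt\sigma_a\|_{L^2(\Omega)}\le C_0\|w^{(1)}\|_{L^2(X)}$, and then your absorption requires $C_0C_2\|\wt u^{(1)}\|_{L^\infty(X)}<\varepsilon'$. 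This condition is not implied by $\cA_1\cap\cA_2$, and in fact it can \emph{never} hold in the setting of the theorem: since both reconstructions fit the same datum, $\aver{\wt u^{(1)}}=(\sigma_a/\wt\sigma_a)\aver{u^{(1)}}\ge (c_0/C_0)\,\varepsilon'$ pointwise, hence $\|\wt u^{(1)}\|_{L^\infty(X)}\ge (c_0/C_0)\,\varepsilon'$; with $C_2=1/(\nu c_0)$ this gives $C_0C_2\|\wt u^{(1)}\|_{L^\infty(X)}\ge \varepsilon'/\nu>\varepsilon'$. So the ``admissible regime'' you invoke for Stage 1 is empty, and since your Stage 2 feeds on Stage 1 (through $I_1$ and the $w^{(1)}$-terms), the whole chain breaks.

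The paper avoids this by never dividing out the positivity bound before absorbing: it multiplies the first-order data identity by $u^{(1)}/\aver{u^{(1)}}$ to get $(\sigma_a-\wt\sigma_a)u^{(1)}=\wt\sigma_a\frac{u^{(1)}}{\aver{u^{(1)}}}\aver{\wt u^{(1)}-u^{(1)}}$ and runs the contraction on the \emph{weighted} quantity $\|(\sigma_a-\wt\sigma_a)u^{(1)}\|_{L^2(X)}$. The self-referential coefficient is then $C_2C_0\|u^{(1)}/\aver{u^{(1)}}\|_{L^\infty(X)}$, which Lemma~\ref{lemma:max bound} (this is where $\cA_1$ enters) bounds by $\Pi<1$ from $\cA_2$: the ratio $u^{(1)}/\aver{u^{(1)}}$ is normalized, so its size is governed by the boundary quantity $\|\Xi\sigma_a g/H_T^{(1)}\|_{L_{d\xi}^\infty(\Gamma_-)}$ appearing in $\cA_2$, unlike your quantity $\|\wt u^{(1)}\|_{L^\infty(X)}/\varepsilon'$, which measures the global amplitude spread of the solution and, as shown above, is bounded below beyond what absorption can tolerate. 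The lower bound $u^{(1)}\ge\varepsilon'>0$ is used only at the very end, \emph{after} absorption, to remove the weights from $\|(\sigma_a-\wt\sigma_a)u^{(1)}\|_{L^2(X)}$ and $\|(\sigma_b-\wt\sigma_b)\aver{u^{(1)}}u^{(1)}\|_{L^2(X)}$. If you reorder your Stage 1 accordingly---absorb with the weight in place, then strip it---your argument becomes the paper's proof.
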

\begin{proof}
	\noindent \textbf{(1). Estimate for $\sigma_a$.} We start with the problem of reconstructing $\sigma_a$ from the first-order data $H^{(1)}_T$. Since the same data set is used for the reconstructions, we have that
	\begin{equation*}
	\Xi \sigma_a \aver{u^{(1)}} = \Xi \wt\sigma_a \aver{\wt u^{(1)}}=H^{(1)}_T.
	\end{equation*}
	This leads to the equality
	\begin{equation}\label{EQ:UQ Data 1}
	(\sigma_a-\wt\sigma_a)u^{(1)}=\wt\sigma_a\frac{u^{(1)}}{\aver{u^{(1)}}} \aver{\wt u^{(1)}-u^{(1)}},
	\end{equation}
	which thus gives the bound
	\begin{align}\label{EQ:UQ Data Bound 1}
	\|(\sigma_a-\wt\sigma_a)u^{(1)}\|_{L^2(X)}
	&\le \|\wt\sigma_a\frac{u^{(1)}}{\aver{u^{(1)}}}\|_{L^\infty(\Omega)} \|\aver{\wt u^{(1)}-u^{(1)}}\|_{L^2(\Omega)}\notag\\ 
	&\le  C_0 \| \frac{u^{(1)}}{\aver{u^{(1)}}}\|_{L^\infty(\Omega)}\|\wt u^{(1)}-u^{(1)}\|_{L^2(X)},
	\end{align}
	where the last follows from~\eqref{EQ:General Bound}.
	
	Let us define $\wt w:=u^{(1)}- \wt u^{(1)}$. Then $\wt w$ solves the following transport equation:
	\begin{equation*}
	\begin{array}{rcll}
	\bv \cdot \nabla \wt w+ \wt\sigma_a(\bx) \wt w &=&\wt \sigma_s(\bx) K \wt w(\bx, \bv)-(\sigma_a-\wt\sigma_a) u^{(1)}+(\sigma_s-\wt\sigma_s)K(u^{(1)}), &\text{in}\ X\\
	\wt w(\bx, \bv) &= &0,  &\text{on}\ \Gamma_{-}\,.
	\end{array}
	\end{equation*}
	This equation gives us that, for some constant $C_2>0$ as in Proposition~\ref{PRO:source L2},
	\begin{equation}\label{EQ:UQ Stab 1}
	\|\wt w\|_{L^2(X)}\le C_2 \Big( \|(\sigma_a-\wt\sigma_a)u^{(1)}\|_{L^2(X)} +\|(\sigma_s-\wt\sigma_s)K(u^{(1)})\|_{L^2(X)}\Big).
	\end{equation}
	The combination of~\eqref{EQ:UQ Data Bound 1} and~\eqref{EQ:UQ Stab 1} then implies the bound:
	\begin{align}\label{EQ:UQ Stab 2}
	&\|(\sigma_a-\wt\sigma_a)u^{(1)}\|_{L^2(X)} \notag\\ 
	&\le  C_2C_0 \|\frac{u^{(1)}}{\aver{u^{(1)}}}\|_{L^\infty(\Omega)}\Big( \|(\sigma_a-\wt\sigma_a)u^{(1)}\|_{L^2(X)} +\|(\sigma_s-\wt\sigma_s)K(u^{(1)})\|_{L^2(X)}\Big).
	\end{align}	
	This, together with the assumption that 
	$$
	\Pi := C_2 C_0 \|\dfrac{\Xi \sigma_a g}{H^{(1)}_T}\|_{L^\infty(\Gamma_-)} < 1,
	$$
	leads to the bound
	\begin{equation}\label{EQ:UQ Stab 3}
	\|(\sigma_a-\wt\sigma_a)u^{(1)}\|_{L^2(X)} \\ 
	\le {\Pi \over 1 - \Pi} \|(\sigma_s-\wt\sigma_s)K(u^{(1)})\|_{L^2(X)}.
	\end{equation}
	Since $u^{(1)}$ is positive and bounded away from zero, we thus have
	\begin{equation}\label{EQ:UQ Stab 4 2}
	\|\sigma_a-\wt\sigma_a\|_{L^2(\Omega)} \le \fc_1 \|\sigma_s-\wt\sigma_s \|_{L^2(\Omega)}.
	\end{equation}
	\noindent \textbf{(2). Estimate for $\sigma_b$.} In a similar manner, we can bound the uncertainty in the reconstruction of $\sigma_b$ with the uncertainty in $\sigma_s$. We again start with the fact that the same data set is used in the reconstructions with different $\sigma_s$. This leads to the relation:
	\begin{equation*}
	\sigma_a \aver{u^{(2)}} + 2\sigma_b \aver{u^{(1)}}^2 = \wt\sigma_a \aver{\wt u^{(2)}} + 2\wt\sigma_b \aver{\wt u^{(1)} }^2 = H^{(2)}_T/\Xi.
	\end{equation*}
	This relation gives us the bound:
	\begin{align}\label{EQ:UQ Stab 5}
	&\|2 (\sigma_b-\wt\sigma_b) \aver{u^{(1)}}u^{(1)}\|_{L^2(X)} \notag\\
	&\le \|(\wt\sigma_a-\sigma_a){u^{(1)} \over \aver{u^{(1)}}} \aver{u^{(2)}}\|_{L^2(X)} +
	\| 2\wt\sigma_b {u^{(1)} \over \aver{u^{(1)}}} ( \aver{u^{(1)}}^2 - \aver{\wt u^{(1)}}^2)\|_{L^2(X)} \notag\\
	&\quad + \|\wt\sigma_a {u^{(1)} \over \aver{u^{(1)}}} ( \aver{u^{(2)}} - \aver{\wt u^{(2)}})\|_{L^2(X)} =: I_1+I_2+I_3.
	\end{align}
	
	To estimate $I_1$ and $I_2$, we apply \eqref{EQ:General Bound}, \eqref{EQ:UQ Stab 1}, and \eqref{EQ:UQ Stab 4 2} to get that
	\begin{align*}
	I_1+I_2&=\|(\wt\sigma_a-\sigma_a){u^{(1)} \over \aver{u^{(1)}}} \aver{u^{(2)}}\|_{L^2(X)} +
	\| 2\wt\sigma_b {u^{(1)} \over \aver{u^{(1)}}} ( \aver{u^{(1)}}^2 - \aver{\wt u^{(1)}}^2)\|_{L^2(X)}\\
	&\leq \fc_1 \|\sigma_s-\wt\sigma_s \|_{L^2(\Omega)}.
	\end{align*}
	
	To estimate $I_3$, we only need to control the term $\| u^{(2)} - \wt u^{(2)}\|_{L^2(X)}$. 
	Let $\wh w:=u^{(2)}-\wt u^{(2)}$. Then $w$ solves:
	\begin{equation*}
	\begin{array}{rcll}
	\bv \cdot \nabla \wh w +\sigma_a \wh w  &=& \sigma_s K \wh w - (\sigma_a -\wt\sigma_a)\wt u^{(2)} + (\sigma_s-\wt\sigma_s)K(\wt u^{(2)}) + 2\wt\sigma_b\aver{\wt u^{(1)}}\wt u^{(1)} - 2 \sigma_b\aver{ u^{(1)}}u^{(1)}, &\text{in}\ X\\
	\wh w(\bx, \bv) &= &0,  &\text{on}\ \Gamma_{-}\,.
	\end{array}
	\end{equation*}
	From Proposition~\ref{PRO:source L2}, we have
	\begin{align}\label{EQ:UQ Stab 6}
	\|\wh w\|_{L^2(X)}&\le C_2\Big( \|(\sigma_a -\wt\sigma_a)\wt u^{(2)}\|_{L^2(X)}  + \|(\sigma_s-\wt\sigma_s)K(\wt u^{(2)})\|_{L^2(X)} + \|2\wt\sigma_b (\aver{\wt u^{(1)}}\wt u^{(1)} -  \aver{ u^{(1)}}u^{(1)})\|_{L^2(X)}  \notag \\
	&\quad + 2\| (\wt\sigma_b  - \sigma_b)\aver{u^{(1)}}u^{(1)}\|_{L^2(X)}\Big).
	\end{align}
	In particular, the first three terms on the right-hand side of \eqref{EQ:UQ Stab 6} are bounded by $\|\sigma_s-\wt\sigma_s\|$ only. 
	This yields that 
	\begin{align*}
	I_3 &= \|\wt\sigma_a {u^{(1)} \over \aver{u^{(1)}}} ( \aver{u^{(2)}} - \aver{\wt u^{(2)}})\|_{L^2(X)} \\
	&\leq \|\wt\sigma_a {u^{(1)} \over \aver{u^{(1)}}}\|_{L^\infty(X)} \|u^{(2)} - \wt u^{(2)}\|_{L^2(X)} \\	
	&\leq \fc_1 \|\sigma_s-\wt\sigma_s \|_{L^2(\Omega)} +2 C_2 C_0 \|{u^{(1)} \over \aver{u^{(1)}}}\|_{L^\infty(X)} \| (\wt\sigma_b  - \sigma_b)\aver{u^{(1)}}u^{(1)}\|_{L^2(X)}.
	\end{align*}
	From \eqref{EQ:UQ Stab 5} and estimates for $I_1,I_2,I_3$, we finally have
	\begin{align}\label{EQ:UQ Stab 5 2}
	&\|2 (\sigma_b-\wt\sigma_b) \aver{u^{(1)}}u^{(1)}\|_{L^2(X)} \notag\\
	&\le  \fc_1 \|\sigma_s-\wt\sigma_s \|_{L^2(\Omega)} +2 C_2 C_0 \|{u^{(1)} \over \aver{u^{(1)}}}\|_{L^\infty(X)} \| (\wt\sigma_b  - \sigma_b)\aver{u^{(1)}}u^{(1)}\|_{L^2(X)}.
	\end{align}
	We can now apply again the hypothesis
	$$
    C_2 C_0 \|{u^{(1)} \over \aver{u^{(1)}}}\|_{L^\infty(X)} \leq \Pi < 1,
	$$
	to obtain that
	$$
	\|(\sigma_b-\wt\sigma_b) \aver{u^{(1)}}u^{(1)}\|_{L^2(X)}\leq \frac{\fc_1 }{2(1-\Pi)}\|\sigma_s-\wt\sigma_s\|_{L^2(\Omega)}.
	$$
	The factor $\aver{u^{(1)}}u^{(1)}$ can again be removed using the fact that $u^{(1)}$ is positive and bounded away from zero. The proof is complete.
\end{proof}  
The above result says that the reconstruction of $(\sigma_a, \sigma_b)$ is reliable if we do not make a large error in the scattering coefficient $\sigma_s$ we assumed in the reconstruction.

\section{Inverse problems in the diffusive regime}
\label{SEC:Diffusion}

We reproduce the results in the previous section in the diffusive regime. Throughout this section, we make the following assumptions on the coefficients:
\begin{align}\label{EQ:Diffusion Coeff}
	\begin{array}{c}
		\Xi, \gamma(\bx), \sigma_a(\bx), \sigma_b(\bx) \in \cC^{2}(\overline\Omega)\\[1ex]
		0 < c_0\leq {\|\Xi\|_{\cC^{2}(\overline\Omega)}, \|\gamma\|_{\cC^{2}(\overline\Omega)}, \|\sigma_a\|_{\cC^{2}(\overline\Omega)}, \|\sigma_b\|_{\cC^{2}(\overline\Omega)}} \leq C_0,  
	\end{array}
\end{align} 
for some constants $c_0,\,C_0>0$. Under this assumption, it is shown in Theorem~\ref{THM:Wellposed Diff} that there exists a unique solution $u\in W^{2,p}(\Omega)$ to~\eqref{EQ:Diff TP} with Dirichlet boundary condition $g\in W^{2-1/p,p}(\p\Omega)$ for small enough $g$.  
In fact, it is straightforward to verify that
\[
	\begin{split}
		\|H_D\|_{L^p(\Omega)}\leq C\left(\|u\|_{L^p(\Omega)}+\|u\|_{L^\infty(\Omega)}\|u\|_{L^p(\Omega)}\right)&\leq C\left(1+\|u\|_{W^{2,p}(\Omega)}\right)\|u\|_{W^{2,p}(\Omega)}. 
		\end{split}
\]
Note that since $u\in W^{2,p}(\Omega)$, Sobolev embedding yields $u\in \cC^{1,1-d/p}(\overline\Omega)$. Then we have 
\[\begin{split}
\|\nabla H_D\|_{L^p(\Omega)}\leq & C\left\|\nabla (\sigma_a u)+u^2 \nabla\sigma_b+\sigma_b\nabla(u^2)\right\|_{L^p(\Omega)} + C\|\sigma_a u+\sigma_b u^2\|_{L^p(\Omega)} \\
\leq& C\left(\|u\|_{L^p(\Omega)}+\|\nabla u\|_{L^p(\Omega)}+\|u\|_{L^\infty(\Omega)}\|u\|_{L^p(\Omega)}+\|\nabla u\|_{L^p(\Omega)}\|u\|_{L^\infty(\Omega)}\right)\\
\leq& C\left(1+\|u\|_{W^{2,p}(\Omega)}\right)\|u\|_{W^{2,p}(\Omega)}.
\end{split}\]
Similarly, we can also show the second derivatives satisfy
\[\|\partial_{jk}H_D\|_{L^p(\Omega)}\leq C\left(1+\|u\|_{W^{2,p}(\Omega)}\right)\|u\|_{W^{2,p}(\Omega)}\quad \textrm{ for }j, k=1,\ldots,d.\]
Therefore, we have
\[
	\|H_D\|_{W^{2,p}(\Omega)}\leq C\left(1+\|f\|_{W^{2-1/p,p}(\Omega)}\right)\|f\|_{W^{2-1/p,p}(\Omega)}.
\]
This shows that for $g$ sufficiently small, the data encoded in the map 
\begin{equation}\label{EQ:Data Map Diff}
	\Lambda^D: g\in W^{2-1/p,p}(\partial\Omega) \mapsto H_D \in {W^{2,p}(\Omega)},
\end{equation}
are well-defined.

The inverse coefficient problem we are interested in solving is the following:\\[1.5ex]
{\bf Inverse Problem:} \emph{Reconstruct the triplet $(\gamma, \sigma_a, \sigma_b)$ in~\eqref{EQ:Diff TP} from data encoded in~$\Lambda^D$} defined in~\eqref{EQ:Data Map Diff}.\\[1.5ex]
This problem has been investigated in~\cite{ReZh-SIAM18} where uniqueness and stability are established for the problem linearized around a known background coefficient.  

\subsection{The reconstruction of $(\gamma, \sigma_a, \sigma_b)$}

We conduct higher-order linearization steps to the following boundary value problem with $g\in W^{2-1/p,p}(\partial\Omega)$ and small $\eps>0$:
\begin{align}\label{EQN:diffusion}
\begin{array}{rcll}
	-\nabla \cdot\gamma \nabla u(\bx;\varepsilon) +\sigma_a(\bx) u(\bx;\varepsilon) +\sigma_b(\bx) u(\bx;\varepsilon) u(\bx;\varepsilon) &=&0, & \mbox{in}\ \ \Omega\\
	u(\bx;\varepsilon)&=& \varepsilon g(\bx), & \mbox{on}\ \partial\Omega.
\end{array}
\end{align}
	Indeed one can show that $u(\bf x;\varepsilon)$ is twice differentiable with respect to $\varepsilon$ by following a similar argument as in the proof of Proposition~\ref{Pro:differentiability} for the transport equation. Therefore one can perform the following linearizations. 

Denote the associated internal data by $H_D(\bf x;\varepsilon)$. By the first-order linearization, we have $u^{(1)}:=\partial_{\varepsilon}u|_{\varepsilon=0}$ satisfying the linear diffusion equation:
\begin{align}\label{linear diffusion equation}
\begin{array}{rcll}
-\nabla \cdot\gamma \nabla u^{(1)}(\bx)+\sigma_a u^{(1)}(\bx) &=&0, & \hbox{in}\ \ \Omega\\
u^{(1)}(\bx) &=& g(\bx),  &\hbox{on}\ \partial\Omega\,.
\end{array}
\end{align}
For the internal data, we also linearize it and then obtain that
\begin{equation}\label{linear internal}
{H}^{(1)}_D(\bx):=\p_{\varepsilon}H_D(\bx;\varepsilon)|_{\varepsilon=0}=\Xi\sigma_a u^{(1)}(\bx).
\end{equation}

When $\Xi$ is known, we can apply the result in~\cite{BaUh-IP10,BaRe-IP11} to obtain the following lemma. 
\begin{proposition}[\cite{BaUh-IP10,BaRe-IP11}]\label{recover gamma alpha}
	Under the assumptions in~\eqref{EQ:Transport Coeff} and~\eqref{EQ:Transport Coeff-2}, there exists a pair of boundary conditions $(g_1, g_2)$ such that the coefficient pair $(\gamma, \sigma_a)$ is uniquely determined by the linearized internal data $({H}^{(1)}_{D,1}, {H}^{(1)}_{D,2})$.
\end{proposition}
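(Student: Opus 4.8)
The plan is to reconstruct $(\gamma,\sigma_a)$ from two well-chosen internal measurements by reducing the problem to a short chain of solvable PDEs, following the strategy of~\cite{BaUh-IP10,BaRe-IP11}. Denote by $u_j$, $j=1,2$, the solution of the linearized equation~\eqref{linear diffusion equation} with boundary datum $g_j$, and write $H_j:=H^{(1)}_{D,j}=\Xi\sigma_a u_j$. Since $\Xi$ is assumed known, the quantities $d_j:=H_j/\Xi=\sigma_a u_j$ are known, and hence so is the ratio
\[
\beta:=\frac{H_1}{H_2}=\frac{u_1}{u_2}.
\]
The point is that, although neither $u_j$ nor the coefficients are known individually, this measured ratio carries enough structure to pin down $(\gamma,\sigma_a)$ once a good pair $(g_1,g_2)$ with $g_j>0$ is fixed (so that $u_j>0$ in $\Omega$ by the maximum principle).

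First I would exploit an algebraic identity. Since $\nabla\cdot(\gamma\nabla u_j)=\sigma_a u_j$, a direct computation gives $\nabla\cdot\big[\gamma(u_2\nabla u_1-u_1\nabla u_2)\big]=u_2\sigma_a u_1-u_1\sigma_a u_2=0$, and because $u_2\nabla u_1-u_1\nabla u_2=u_2^2\,\nabla\beta$ this reads
\[
\nabla\cdot\big(\mu\,\nabla\beta\big)=0,\qquad \mu:=\gamma u_2^2>0.
\]
Expanding, $\mu$ satisfies the first-order transport equation $\nabla\beta\cdot\nabla\ln\mu=-\Delta\beta$, whose coefficients are all computable from the data. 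Provided $\nabla\beta$ does not vanish in $\overline\Omega$ (see below), its integral curves are the gradient-flow lines of $\beta$; since $\beta$ increases strictly along them, they have no closed orbits and every such curve exits through $\partial\Omega$. Integrating the transport ODE $\tfrac{d}{dt}\ln\mu=-\Delta\beta$ along these curves from the boundary inward, and using that $\mu|_{\partial\Omega}=\gamma|_{\partial\Omega}\,g_2^2$ is known by assumption~\eqref{EQ:Transport Coeff-2}, determines $\mu=\gamma u_2^2$ throughout $\Omega$.

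Next I would recover $u_2$ itself. Writing $w:=1/u_2$, one has $\gamma\nabla u_2=\tfrac{\mu}{u_2^2}\nabla u_2=-\mu\nabla w$, so the equation $\nabla\cdot(\gamma\nabla u_2)=\sigma_a u_2=d_2$ becomes the linear elliptic boundary value problem
\[
\nabla\cdot(\mu\nabla w)=-d_2 \ \text{ in }\ \Omega,\qquad w=1/g_2 \ \text{ on }\ \partial\Omega,
\]
with $\mu$ uniformly positive and $d_2$ now both known. This problem is well posed, so $w$, and hence $u_2=1/w$, is uniquely determined; finally $\sigma_a=d_2/u_2$ and $\gamma=\mu/u_2^2$ are read off. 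Uniqueness of $(\gamma,\sigma_a)$ follows because each step above has a unique outcome.

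The main obstacle is the very first requirement: producing a pair $(g_1,g_2)$ for which $\nabla\beta=\nabla(u_1/u_2)$ has no critical point in $\overline\Omega$. Note that if the map $\Phi=(u_1,u_2)$ is a local diffeomorphism, then $\nabla\beta=(D\Phi)^{T}\nabla_y(y_1/y_2)\neq0$ since $\nabla_y(y_1/y_2)\neq0$ for $y_2>0$; thus it suffices to make $\Phi$ non-degenerate. This is precisely the crux of~\cite{BaUh-IP10,BaRe-IP11}: in dimension $d=2$ it is secured by the Rad\'o--Kneser--Choquet theorem (in its form for the diffusion operator), choosing $(g_1,g_2)$ so that the boundary trace maps $\partial\Omega$ homeomorphically onto the boundary of a convex set, which forces $\Phi$ to be a diffeomorphism; in dimensions $d\geq3$ one constructs complex-geometric-optics solutions $u_j\approx e^{\rho_j\cdot\bx}$ with $\rho_1\neq\rho_2$ and $\rho_j\cdot\rho_j=0$, for which $\nabla\beta\approx(\rho_1-\rho_2)e^{(\rho_1-\rho_2)\cdot\bx}\neq0$, and takes $g_j$ to be the corresponding traces. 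Controlling the CGO remainders so that non-vanishing persists on all of $\overline\Omega$, together with the regularity of $\beta$ needed to make sense of $\Delta\beta$ in the transport step (available here since $u\in W^{2,p}$ by Theorem~\ref{THM:Wellposed Diff}), is where the real work lies.
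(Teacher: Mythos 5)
The paper gives no proof of this proposition: it is imported verbatim from \cite{BaUh-IP10,BaRe-IP11}, with the remark that the construction of $(g_1,g_2)$ is highly non-trivial, so your attempt must be measured against the cited argument. Your proposal reconstructs that argument correctly in its essentials and follows the same route: the measured ratio $\beta=H_1/H_2=u_1/u_2$, the divergence identity $\nabla\cdot\big(\gamma u_2^2\,\nabla\beta\big)=0$, integration of the resulting transport equation along the flow of $\nabla\beta$ starting from the boundary (where $\mu=\gamma u_2^2$ is known thanks to~\eqref{EQ:Transport Coeff-2}), all under the uniform nonvanishing of $\nabla\beta$ secured by well-chosen illuminations --- Rad\'o--Kneser--Choquet/Alessandrini-type arguments in $d=2$, complex geometric optics solutions in $d\ge 3$. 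Your finish is a mild but valid variant of the published one: Bal--Uhlmann pass to the Liouville-transformed variable $v=\sqrt{\gamma}\,u$ solving $\Delta v+qv=0$, recover $v_2$ from the transport step, read off $q=-\Delta v_2/v_2$, and then solve an elliptic equation for $\sqrt{\gamma}$ with its known boundary trace; your alternative --- recover $\mu$ directly, then solve the \emph{linear} Dirichlet problem $\nabla\cdot(\mu\nabla w)=-d_2$, $w|_{\partial\Omega}=1/g_2$, for $w=1/u_2$, and read off $\sigma_a=d_2/u_2$, $\gamma=\mu/u_2^2$ --- avoids the Liouville transform and the nonlinear step, which is a small simplification. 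Two minor points to tighten: the ``local diffeomorphism'' criterion for $\Phi=(u_1,u_2)$ only parses in $d=2$ (for $d\ge3$ the CGO computation you then give is the correct substitute), and the positivity $u_j>0$ used throughout requires $g_j>0$ together with the maximum principle for $-\nabla\cdot\gamma\nabla+\sigma_a$ with $\sigma_a\ge0$, which you invoke only implicitly. The one genuinely hard ingredient --- existence of illuminations making $|\nabla\beta|$ bounded below on $\overline\Omega$, including control of the CGO remainders --- is deferred by you to \cite{BaUh-IP10,BaRe-IP11} (and obtained differently in $d=2$ in \cite{AlDiFrVe-AdM17}); since the paper itself defers exactly the same step, this is acceptable for a proposition stated as a citation.
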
 
The construction of the boundary condition pair $(g_1, g_2)$ is highly non-trivial. We refer to~\cite{BaUh-IP10,BaRe-IP11} for the technical details and ~\cite{AlDiFrVe-AdM17} for an alternative approach to relax some of the strong conditions needed for the theory to work. Note also that with the assumption that $H_D$ is known on the boundary $\partial\Omega$, ${\sigma_a}_{|\partial\Omega}$ can be reconstructed by $H_D^{(1)}/(\Xi g)$. This would allow us to remove the assumption that ${\sigma_a}_{|\partial\Omega}$ is known on the boundary in the diffusive regime.

Next we perform the second linearization. Set 
$$
u^{(2)}(\bx):=\p^2_{\varepsilon}u(\bx;\varepsilon)|_{\varepsilon=0}.
$$
It satisfies
\begin{align}\label{EQN:2nd linearization}
\begin{array}{rcll}
-\nabla \cdot\gamma \nabla u^{(2)}(\bx)+\sigma_a u^{(2)}(\bx)+{2}\sigma_b  u^{(1)} u^{(1)} (\bx)&=& 0, & \hbox{in}\ \ \Omega\\
u^{(2)}(\bx) &=& 0,  &\hbox{on}\ \partial\Omega\,.
\end{array}
\end{align} 
The second order linearization of the internal data gives
$$
   H^{(2)}_D(\bx):=\p_{\varepsilon}^2 H_D(\bx;\varepsilon)|_{\varepsilon=0}=\Xi\LC\sigma_a u^{(2)}+ {2}\sigma_b u^{(1)} u^{(1)}\RC(\bx).
$$

From Proposition~\ref{recover gamma alpha}, the coefficients $\gamma$ and $\sigma_a$ have been uniquely recovered in the first linearization.
Hence it remains to recover $\sigma_b$, which appears in the source term in \eqref{EQN:2nd linearization}. To this end, let $u$ and $\wt{u}$ be solutions to~\eqref{EQN:diffusion} with coefficients $(\gamma,\sigma_a,\sigma_b)$ and $(\gamma, \sigma_a,\wt\sigma_b)$ respectively. We denote the corresponding data by $H_D$ and $\wt H_D$. Then the first differentiation of $u$ and $\wt u$ satisfy
$
u^{(1)} = \wt{u}^{(1)}
$
and also $ u^{(2)}$, and $\wt{u}^{(2)}$ are solutions to \eqref{EQN:2nd linearization} with $\sigma_b$ and $\wt\sigma_b$, respectively.

Then we have the following stability result for $\sigma_b$. 

\begin{theorem}	Let $H_D$ and $\wt H_D$ be the internal data corresponding to the coefficient sets $(\gamma,\sigma_a,\sigma_b)$ and $(\gamma, \sigma_a,\wt\sigma_b)$, both satisfying~\eqref{EQ:Diffusion Coeff}, respectively. Then we have 
\begin{equation}\label{eqn:diff_stability}
 \|(\sigma_b-\wt\sigma_b)(u^{(1)})^2\|_{L^p(\Omega)}\leq C\|H^{(2)}_D-\wt H^{(2)}_D\|_{W^{2,p}(\Omega)}.
\end{equation}
If, in addition, we have that $\underline{g}:=\displaystyle\inf_{\partial\Omega} g>0$, 
then 
\begin{equation}\label{eqn:diff_stability_mub} 
	 \|\sigma_b-\wt\sigma_b \|_{L^p(\Omega)}\leq C\|H^{(2)}_D-\wt H^{(2)}_D\|_{W^{2,p}(\Omega)}
\end{equation}
where the constant $C>0$ depends on $\Omega,\gamma,\Xi$ and $\sigma$.
\end{theorem}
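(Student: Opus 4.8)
The plan is to exploit the special algebraic structure of the diffusion internal data, which produces a cancellation that is absent in the transport regime and thereby removes the need for any smallness class like $\cA_1,\cA_2$. Since $\gamma$ and $\sigma_a$ are common to both coefficient sets, the first linearizations coincide, $u^{(1)}=\wt u^{(1)}$, and I set $w:=u^{(2)}-\wt u^{(2)}$. Subtracting the two copies of~\eqref{EQN:2nd linearization} shows that $w$ solves
\begin{equation*}
-\nabla\cdot\gamma\nabla w+\sigma_a w=-2(\sigma_b-\wt\sigma_b)(u^{(1)})^2\quad\text{in }\Omega,\qquad w=0\ \text{on }\partial\Omega,
\end{equation*}
while subtracting the two expressions for $H^{(2)}_D$ gives the pointwise identity
\begin{equation*}
\frac{H^{(2)}_D-\wt H^{(2)}_D}{\Xi}=\sigma_a w+2(\sigma_b-\wt\sigma_b)(u^{(1)})^2.
\end{equation*}

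The key step is to combine these two relations. Solving the PDE for the source, $2(\sigma_b-\wt\sigma_b)(u^{(1)})^2=\nabla\cdot\gamma\nabla w-\sigma_a w$, and inserting it into the data identity, the two $\sigma_a w$ terms cancel exactly, leaving the clean second-order elliptic problem
\begin{equation*}
\nabla\cdot\gamma\nabla w=\frac{H^{(2)}_D-\wt H^{(2)}_D}{\Xi}\quad\text{in }\Omega,\qquad w=0\ \text{on }\partial\Omega.
\end{equation*}
Because the zeroth-order term has dropped out and $\gamma\in\cC^2(\overline\Omega)$ is uniformly elliptic with $\Xi$ bounded below by $c_0$, the Dirichlet operator $\nabla\cdot\gamma\nabla$ is invertible and standard $L^p$ elliptic regularity gives $\|w\|_{W^{2,p}(\Omega)}\le C\|H^{(2)}_D-\wt H^{(2)}_D\|_{L^p(\Omega)}$ with $C$ depending only on $\Omega,\gamma,\Xi$. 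Feeding this back into the data identity and using $\Xi\ge c_0$ together with the $L^\infty$ bound on $\sigma_a$ yields
\begin{equation*}
\|(\sigma_b-\wt\sigma_b)(u^{(1)})^2\|_{L^p(\Omega)}\le C\big(\|H^{(2)}_D-\wt H^{(2)}_D\|_{L^p(\Omega)}+\|w\|_{L^p(\Omega)}\big)\le C\|H^{(2)}_D-\wt H^{(2)}_D\|_{W^{2,p}(\Omega)},
\end{equation*}
which is~\eqref{eqn:diff_stability}.

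For~\eqref{eqn:diff_stability_mub} I would remove the weight $(u^{(1)})^2$ by establishing a positive lower bound for $u^{(1)}$. The function $u^{(1)}$ solves the homogeneous linear diffusion equation~\eqref{linear diffusion equation} with boundary value $g\ge\underline g>0$ and nonnegative potential $\sigma_a$; by the weak and strong maximum principles $u^{(1)}>0$ throughout $\overline\Omega$, and since $u^{(1)}\in W^{2,p}(\Omega)\hookrightarrow\cC(\overline\Omega)$ on the compact set $\overline\Omega$ we obtain $\min_{\overline\Omega}u^{(1)}=:m>0$. Dividing by $(u^{(1)})^2\ge m^2$ then gives $\|\sigma_b-\wt\sigma_b\|_{L^p(\Omega)}\le m^{-2}\|(\sigma_b-\wt\sigma_b)(u^{(1)})^2\|_{L^p(\Omega)}$, and combining with the previous bound completes the proof.

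I expect the only genuinely delicate point to be recognizing the cancellation of the $\sigma_a w$ terms: this is precisely what converts the otherwise circular coupling between $w$ and the unknown source (handled in the transport case via the contraction condition $\Pi<1$ and the classes $\cA_1,\cA_2$) into a directly solvable elliptic boundary value problem, which is why no smallness hypothesis appears in the diffusive statement. The remaining ingredients, namely $L^p$ elliptic regularity in divergence form and the maximum principle giving the positive lower bound on $u^{(1)}$, are standard once this structural observation is in place.
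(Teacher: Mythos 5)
Your proof is correct, and it takes a route that is recognizably different in packaging from the paper's, even though both exploit the same structural cancellation. The paper forms the known function $U:=\frac{H^{(2)}_D}{\Xi\sigma_a}=u^{(2)}+\frac{2\sigma_b}{\sigma_a}(u^{(1)})^2$ and shows that the \emph{target} quantity $\psi-\wt\psi=2(\sigma_b-\wt\sigma_b)\frac{(u^{(1)})^2}{\sigma_a}$ itself solves a Dirichlet problem whose source is $-\nabla\cdot\gamma\nabla\bigl(\frac{H^{(2)}_D-\wt H^{(2)}_D}{\Xi\sigma_a}\bigr)+\frac1\Xi\bigl(H^{(2)}_D-\wt H^{(2)}_D\bigr)$; elliptic regularity then gives a $W^{2,p}$ bound on the weighted coefficient difference, at the price of putting the data difference under the divergence operator, which is exactly why the $W^{2,p}(\Omega)$ norm of $H^{(2)}_D-\wt H^{(2)}_D$ appears on the right. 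You instead treat $w=u^{(2)}-\wt u^{(2)}$ as the elliptic unknown: after the $\sigma_a w$ cancellation you solve $\nabla\cdot\gamma\nabla w=\frac{H^{(2)}_D-\wt H^{(2)}_D}{\Xi}$ with zero Dirichlet data and recover the source algebraically from the data identity. The two computations are linked by $\psi-\wt\psi=(U-\wt U)-w$, so the algebra is the same, but your ordering buys something concrete: you never differentiate the data, so you obtain $\|(\sigma_b-\wt\sigma_b)(u^{(1)})^2\|_{L^p(\Omega)}\le C\|H^{(2)}_D-\wt H^{(2)}_D\|_{L^p(\Omega)}$, which is strictly stronger than~\eqref{eqn:diff_stability} (the $W^{2,p}$ norm on the right is then pure slack), whereas the paper's version controls the stronger $W^{2,p}$ norm of the weighted difference but genuinely needs the $W^{2,p}$ data norm. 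For the second estimate, your maximum-principle argument for $\inf_{\overline\Omega}u^{(1)}>0$ (using $\sigma_a\ge 0$, $g\ge\underline g>0$, and $W^{2,p}\hookrightarrow\cC(\overline\Omega)$ for $p>d$) is a self-contained replacement for the paper's citation of Alessandrini--Di~Cristo--Francini--Vessella, and is fine as stated; just note, as the paper does implicitly, that the resulting lower bound $\eps'$ (your $m$) depends on $g$ as well as on $\Omega,\gamma,\sigma_a$, so the constant in~\eqref{eqn:diff_stability_mub} inherits that dependence. Two minor bookkeeping points: invertibility of the pure divergence-form Dirichlet operator and the estimate $\|w\|_{W^{2,p}}\le C\|F\|_{L^p}$ require removing the $\|w\|_{L^p}$ term from the a~priori estimate by a uniqueness/compactness argument (standard, but worth a citation), and the final feedback step uses $\|\sigma_a\|_{L^\infty}\le C_0$, so $\sigma_a$ should be listed among the dependencies of your constant, consistent with the theorem's statement.
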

\begin{proof}
	Let $U:=\frac{H^{(2)}_D}{\Xi\sigma_a}=u^{(2)}+{2\sigma_b\over\sigma_a} u^{(1)} u^{(1)}$. It is a known $W^{2,p}(\Omega)$ function in $\Omega$ since $H^{(2)}_D,\Xi,\sigma_a$ are known. Let $\psi= {2\sigma_b\over \sigma_a} u^{(1)} u^{(1)}$. It solves the following problem:	
	\begin{equation}\label{EQN:psi}
		\begin{array}{rcll}
			 -\nabla \cdot\gamma  \nabla \psi &=& -\nabla \cdot\gamma  \nabla U+\sigma_a U,  &\text{in}\ \Omega\\
			 \psi&= & U, &\text{on}\ \p\Omega\,.
		\end{array}
	\end{equation}
    Since $U$, $\gamma$ and $\sigma_a$ are all known, solving the boundary value problem \eqref{EQN:psi} recovers $\psi$ in $\Omega$. Therefore, we can recover $\sigma_b$ at the point where $u^{(1)}$ is not vanishing. More precisely, reconstructing $\sigma_b$ through $\sigma_b = \psi \sigma_a/(2u^{(1)}u^{(1)})$.
    Indeed given a nonzero boundary condition $g$, by the unique continuation,  
    the set of points in $\Omega$ where $u^{(1)}=0$ has measure zero. This shows that $H^{(2)}_D$ determines $\sigma_b$.

To prove the stability estimates, we use the fact that
\begin{equation}
	\begin{array}{rcll}
	-\nabla \cdot\gamma  \nabla \LC {2(\sigma_b-\wt\sigma_b)\frac{u^{(1)}u^{(1)}}{\sigma_a}}\RC&=&-\nabla\cdot\gamma\nabla\left(\frac{H^{(2)}_D-\wt H^{(2)}_D}{\Xi\sigma_a}\right)+\frac1{\Xi}(H^{(2)}_D-\wt H^{(2)}_D), &\text{in}\ \Omega\\
	{2(\sigma_b-\wt\sigma_b)\frac{u^{(1)}u^{(1)}}{\sigma_a}}&=&0,  &\text{on}\ \p\Omega\,,
		\end{array}
\end{equation}
and elliptic regularity to have
\begin{align}\label{EST: sigma a}
\begin{split}
	\left\|(\sigma_b-\wt\sigma_b)\frac{u^{(1)}u^{(1)}}{\sigma_a}\right\|_{W^{2,p}(\Omega)}&\leq  C\left\|-\nabla\cdot\gamma\nabla\left(\frac{H^{(2)}_D-\wt H^{(2)}_D}{\Xi\sigma_a}\right)+\frac1{\Xi}(H^{(2)}_D-\wt H^{(2)}_D)\right\|_{L^p(\Omega)}\\
	&\leq C\|H_D^{(2)}-\wt H_D^{(2)}\|_{W^{2,p}(\Omega)}.
\end{split}.
\end{align}
This proves~\eqref{eqn:diff_stability}. 
	
When we have additionally that $\underline{g}:=\displaystyle\inf_{\partial\Omega} g>0$, we conclude from~\cite[Proof of Claim 4.2]{AlDiFrVe-AdM17} (see also a summary in~\cite[Theorem 2.4]{ReZh-SIAM18}) that
	   $$u^{(1)}\geq \varepsilon'>0 $$
	for some constant $\varepsilon'>0$. Together with the boundedness of $\sigma_a$, this allows us to remove the factor $\frac{u^{(1)} u^{(1)}}{\sigma_a}$ in~\eqref{EST: sigma a} to get~\eqref{eqn:diff_stability_mub}.  
\end{proof}

\subsection{Parametric uncertainty in diffusive regime.} 

We consider here the stability of reconstructing $(\sigma_a, \sigma_b)$ with respect to changes in the diffusion coefficient $\gamma$.

We first derive the following estimates, which will be applied later to show the uncertainty result.
\begin{lemma}\label{lemma:diffusion u1}
   Let $H_D$ be the internal function associated with both $(\Xi,\gamma,\sigma_a,\sigma_b)$ and $(\Xi,\wt\gamma, \wt\sigma_a,\wt\sigma_b)$.
	Then we have
	\begin{equation}\label{eqn:est_u1}
	\|u^{(1)}-\wt u^{(1)} \|_{W^{2,p}(\Omega)} \le C \left\|\frac{\wt\gamma-\gamma}{\wt \gamma}\right\|_{W^{1,p}(\Omega)},
	\end{equation}
	and 
	\begin{equation}\label{eqn:est_u2}
	\|u^{(2)}-\wt u^{(2)} \|_{W^{2,p}(\Omega)} \le C\left\|\frac{\wt\gamma-\gamma}{\wt \gamma}\right\|_{W^{1,p}(\Omega)},
	\end{equation}
for some positive constant $C$ depending on $\Omega, \gamma, g$.
\end{lemma}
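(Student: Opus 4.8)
The plan is to reduce each of the two differences to a single divergence-form elliptic boundary value problem whose right-hand side is controlled by $\|(\wt\gamma-\gamma)/\wt\gamma\|_{W^{1,p}(\Omega)}$, and then to invoke $W^{2,p}$ elliptic regularity. The decisive input is that $H_D$ is common to both coefficient sets, so all of its $\varepsilon$-derivatives agree; in particular the first- and second-order data identities
\[
\Xi\sigma_a u^{(1)}=\Xi\wt\sigma_a\wt u^{(1)},\qquad \Xi\big(\sigma_a u^{(2)}+2\sigma_b (u^{(1)})^2\big)=\Xi\big(\wt\sigma_a\wt u^{(2)}+2\wt\sigma_b (\wt u^{(1)})^2\big)
\]
hold pointwise (the common Gr\"uneisen factor $\Xi$ drops out). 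These are exactly what make the lower-order terms cancel below, so that the source of the difference equation depends only on $\gamma-\wt\gamma$ and not on the unknown differences $\sigma_a-\wt\sigma_a$, $\sigma_b-\wt\sigma_b$.

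For \eqref{eqn:est_u1} I would set $w:=u^{(1)}-\wt u^{(1)}$ and subtract the two copies of \eqref{linear diffusion equation}. The zeroth-order contribution $\sigma_a u^{(1)}-\wt\sigma_a\wt u^{(1)}$ vanishes identically by the first-order data identity, so after writing $\gamma\nabla u^{(1)}=\wt\gamma\nabla u^{(1)}+(\gamma-\wt\gamma)\nabla u^{(1)}$ one is left, with $\beta:=(\wt\gamma-\gamma)/\wt\gamma$, with
\[
-\nabla\cdot\wt\gamma\nabla w=\nabla\cdot\big((\gamma-\wt\gamma)\nabla u^{(1)}\big)=-\nabla\cdot\big(\wt\gamma\,\beta\,\nabla u^{(1)}\big)\quad\text{in }\Omega,\qquad w=0\ \text{on }\partial\Omega.
\]
Since $\wt\gamma\in\cC^{2}(\overline\Omega)$ with $\wt\gamma\ge c_0>0$ and $\partial\Omega$ is smooth, the operator $-\nabla\cdot\wt\gamma\nabla$ with homogeneous Dirichlet data is an isomorphism $W^{2,p}\cap W^{1,p}_0\to L^p$, giving $\|w\|_{W^{2,p}(\Omega)}\le C\|\nabla\cdot(\wt\gamma\beta\nabla u^{(1)})\|_{L^p(\Omega)}$. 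I then expand $\nabla\cdot(\wt\gamma\beta\nabla u^{(1)})=\nabla(\wt\gamma\beta)\cdot\nabla u^{(1)}+\wt\gamma\beta\,\Delta u^{(1)}$ and bound it using $u^{(1)}\in W^{2,p}(\Omega)\hookrightarrow\cC^{1,1-d/p}(\overline\Omega)$ (so $\nabla u^{(1)}\in L^\infty$ and $\Delta u^{(1)}\in L^p$), $\wt\gamma\in\cC^{2}$, and $W^{1,p}\hookrightarrow L^\infty$ for $p>d$; every factor carrying $\beta$ is then controlled by $\|\beta\|_{W^{1,p}(\Omega)}$, which yields \eqref{eqn:est_u1}.

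The proof of \eqref{eqn:est_u2} is structurally identical, with $u^{(2)}$ replacing $u^{(1)}$. Setting $\hat w:=u^{(2)}-\wt u^{(2)}$ and subtracting the two copies of \eqref{EQN:2nd linearization}, the combined lower-order and source terms $[\sigma_a u^{(2)}+2\sigma_b (u^{(1)})^2]-[\wt\sigma_a\wt u^{(2)}+2\wt\sigma_b (\wt u^{(1)})^2]$ cancel exactly by the second-order data identity, leaving
\[
-\nabla\cdot\wt\gamma\nabla\hat w=-\nabla\cdot\big(\wt\gamma\,\beta\,\nabla u^{(2)}\big)\quad\text{in }\Omega,\qquad \hat w=0\ \text{on }\partial\Omega.
\]
The same elliptic estimate together with the a priori bound $\|u^{(2)}\|_{W^{2,p}(\Omega)}\le C$ (from the well-posedness in Theorem~\ref{THM:Wellposed Diff}, noting $u^{(2)}\hookrightarrow\cC^{1,1-d/p}(\overline\Omega)$) then gives \eqref{eqn:est_u2}.

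I expect the only genuinely delicate point to be the bookkeeping in the second step: confirming that the source $\nabla\cdot(\wt\gamma\beta\nabla u^{(k)})$ is genuinely in $L^p(\Omega)$ (rather than merely in $W^{-1,p}$), and that it is bounded by the $W^{1,p}$—not $W^{2,p}$—norm of $\beta$. This hinges on the Sobolev regularity $u^{(k)}\in\cC^{1,1-d/p}(\overline\Omega)$ and on $p>d$, so that $\|\beta\|_{L^\infty(\Omega)}\le C\|\beta\|_{W^{1,p}(\Omega)}$; everything else is the standard cancellation-plus-elliptic-regularity mechanism, which is precisely the elliptic analogue of the transport-regime argument used in Theorem~\ref{THM:Uncertain ERT}.
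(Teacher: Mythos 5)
Your proposal is correct and follows essentially the same route as the paper: the common data identities $\Xi\sigma_a u^{(1)}=\Xi\wt\sigma_a\wt u^{(1)}$ and $\Xi(\sigma_a u^{(2)}+2\sigma_b (u^{(1)})^2)=\Xi(\wt\sigma_a\wt u^{(2)}+2\wt\sigma_b(\wt u^{(1)})^2)$ cancel the lower-order terms, the difference solves a divergence-form Dirichlet problem with source $\nabla\cdot\big((\gamma-\wt\gamma)\nabla u^{(k)}\big)$, and writing $\gamma-\wt\gamma=-\wt\gamma\,\beta$ with $\beta=(\wt\gamma-\gamma)/\wt\gamma$ plus $W^{2,p}$ elliptic regularity and the embedding $W^{2,p}(\Omega)\hookrightarrow \cC^{1,1-d/p}(\overline\Omega)$ for $p>d$ gives the bound by $\|\beta\|_{W^{1,p}(\Omega)}$. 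The only cosmetic difference is in the product-rule bookkeeping: the paper keeps the flux intact and substitutes $\nabla\cdot\wt\gamma\nabla\wt u^{(k)}=\wt H^{(k)}_D/\Xi$ (following~\cite{ReVa-SIAM20}), while you expand $\nabla(\wt\gamma\beta)\cdot\nabla u^{(k)}+\wt\gamma\beta\Delta u^{(k)}$ directly; both yield the same $L^p$ estimate.
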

\begin{proof} 
First, we have $\sigma_a u^{(1)}=\wt\sigma_a\wt u^{(1)}$. Let $w:=u^{(1)}- \wt u^{(1)}$. Then $w$ solves the diffusion equation:
    \begin{equation*}
    -\nabla \cdot \gamma \nabla w =\nabla\cdot(\gamma-\wt\gamma)\nabla \wt u^{(1)},\quad \mbox{in}\ \Omega, \qquad w=0, \quad \mbox{on}\ \partial\Omega.
    \end{equation*}
    This leads to the fact that, for some constant $C>0$ depending on $\Omega$ and $\gamma$,
	\begin{equation}\label{EQ:Stab F}
	\|u^{(1)}-\wt u^{(1)}\|_{W^{2,p}(\Omega)}\le C\|\nabla\cdot(\gamma-\wt\gamma)\nabla \wt u^{(1)}\|_{L^p(\Omega)}.
	\end{equation}
	Following~\cite{ReVa-SIAM20}, we verify that:
	\begin{align*}
	\nabla\cdot(\wt\gamma-\gamma)\nabla \wt u^{(1)}&=\nabla\cdot\frac{\wt\gamma-\gamma}{\wt\gamma} \wt\gamma\nabla \wt u^{(1)} = \frac{\wt\gamma-\gamma}{\wt\gamma}\nabla\cdot\wt\gamma\nabla \wt u^{(1)} + \wt\gamma\nabla \wt u^{(1)}\cdot\nabla \frac{\wt\gamma-\gamma}{\wt\gamma} \\ 
	&= \dfrac{\wt H^{(1)}_D}{\Xi} \frac{\wt\gamma-\gamma}{\wt\gamma} + \wt\gamma\nabla \wt u^{(1)} \cdot\nabla \frac{\wt\gamma-\gamma}{\wt\gamma}.
	\end{align*}
	This implies that
	\begin{align*}
	\|\nabla\cdot (\wt\gamma-\gamma)\nabla \wt u^{(1)}\|_{L^p(\Omega)} 
	&\le C\left(\left\|\wt H_D^{(1)}\right\|_{L^\infty(\Omega)}\left\|\frac{\wt\gamma-\gamma}{\wt\gamma}\right\|_{L^p(\Omega)}+ \|\nabla \wt u^{(1)}\|_{L^\infty(\Omega)} \left\|\nabla \frac{\wt\gamma-\gamma}{\wt\gamma}\right\|_{L^p(\Omega)}\right)\\
	&\le C\left(\left\|\wt H_D^{(1)}\right\|_{W^{2,p}(\Omega)}+ \|\wt u^{(1)}\|_{W^{2,p}(\Omega)}\right) \left\|\frac{\wt\gamma-\gamma}{\wt\gamma}\right\|_{W^{1,p}(\Omega)}\\
	&\le C\left\|g\right\|_{W^{2-1/p,p}(\p\Omega)}  \left\|\frac{\wt\gamma-\gamma}{\wt\gamma}\right\|_{W^{1,p}(\Omega)}
	\end{align*}
for some constant $C$. This can be combined with~\eqref{EQ:Stab F} to obtain~\eqref{eqn:est_u1}. Meanwhile, we can verify that
\[
	-\nabla\cdot \gamma \nabla(u^{(2)}-\wt u^{(2)})=\nabla\cdot(\gamma-\wt\gamma)\nabla\wt u^{(2)}\quad \textrm{ in }\Omega.
\]
In a similar manner, we can derive the estimate for $u^{(2)}$ in~\eqref{eqn:est_u2}.
\end{proof} 
We are now ready to show the sensitivity result for uncertainty quantification. 
\begin{theorem}\label{THM:Uncertain Diff}
		Let $H_D$ be the internal data associated with both $(\Xi,\gamma,\sigma_a,\sigma_b)$ and $(\Xi,\wt\gamma, \wt\sigma_a,\wt\sigma_b)$. If we have that $\underline{g}:=\displaystyle\inf_{\partial\Omega} g>0$, then
\begin{equation}\label{EQ:Uncertainty Diff}
	\left\|\sigma_a-\wt\sigma_a\right\|_{L^p(\Omega)}+\left\|\sigma_b-\wt\sigma_b\right\|_{L^p(\Omega)} \le C  \left\|\frac{\wt\gamma-\gamma}{\wt\gamma}\right\|_{W^{1,p}(\Omega)},
	\end{equation}
    where $C$ is a positive constant depending on $\Omega,\gamma,g,\sigma_a$ and $\sigma_b$.  
\end{theorem}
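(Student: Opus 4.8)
The plan is to mirror the two-step structure of the transport-regime argument in Theorem~\ref{THM:Uncertain ERT}: first control the $\sigma_a$-discrepancy, then the $\sigma_b$-discrepancy, with Lemma~\ref{lemma:diffusion u1} serving as the workhorse that ties the solution differences $u^{(1)}-\wt u^{(1)}$ and $u^{(2)}-\wt u^{(2)}$ to the relative change in $\gamma$. The positivity lower bound $u^{(1)}\ge\varepsilon'>0$ (from the hypothesis $\underline g>0$, cited from~\cite{AlDiFrVe-AdM17}) will be what lets me strip off the solution-dependent prefactors at the end of each step.

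For $\sigma_a$, I would start from the fact that the shared first-order datum \eqref{linear internal} forces $\sigma_a u^{(1)}=\wt\sigma_a\wt u^{(1)}$ in $\Omega$. Rearranging as $(\sigma_a-\wt\sigma_a)u^{(1)}=\wt\sigma_a(\wt u^{(1)}-u^{(1)})$ and taking $L^p$ norms, I would bound the right-hand side by $\|\wt\sigma_a\|_{L^\infty}\|u^{(1)}-\wt u^{(1)}\|_{L^p}$ and invoke \eqref{eqn:est_u1} to obtain $\|(\sigma_a-\wt\sigma_a)u^{(1)}\|_{L^p(\Omega)}\le C\|\tfrac{\wt\gamma-\gamma}{\wt\gamma}\|_{W^{1,p}(\Omega)}$. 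Dividing out the strictly positive $u^{(1)}$ then yields the $\sigma_a$ half of \eqref{EQ:Uncertainty Diff}.

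For $\sigma_b$, I would use the shared second-order datum, which gives $\sigma_a u^{(2)}+2\sigma_b(u^{(1)})^2=\wt\sigma_a\wt u^{(2)}+2\wt\sigma_b(\wt u^{(1)})^2$. Isolating the target and splitting differences produces
\[
2(\sigma_b-\wt\sigma_b)(u^{(1)})^2=\wt\sigma_a(\wt u^{(2)}-u^{(2)})+(\wt\sigma_a-\sigma_a)u^{(2)}+2\wt\sigma_b\big((\wt u^{(1)})^2-(u^{(1)})^2\big).
\]
The three terms are controlled respectively by \eqref{eqn:est_u2}; by the $\sigma_a$-bound just established together with the $L^\infty$ bound on $u^{(2)}$ from the embedding $W^{2,p}(\Omega)\hookrightarrow\cC^{1,1-d/p}(\overline\Omega)$; and by \eqref{eqn:est_u1} after factoring $(\wt u^{(1)})^2-(u^{(1)})^2=(\wt u^{(1)}-u^{(1)})(\wt u^{(1)}+u^{(1)})$. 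Each is bounded by $C\|\tfrac{\wt\gamma-\gamma}{\wt\gamma}\|_{W^{1,p}(\Omega)}$, and dividing out $(u^{(1)})^2$ completes the proof.

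The computations are essentially algebraic; the genuine content is packaged inside Lemma~\ref{lemma:diffusion u1} and the positivity of $u^{(1)}$. The point I expect to matter most — and where the diffusive regime is cleaner than the transport one — is that here no smallness condition analogous to $\Pi<1$ is needed: because $H^{(2)}_D$ is identical for both coefficient sets, the zeroth-order and source terms cancel exactly in the equation for $u^{(2)}-\wt u^{(2)}$ (as already exploited in Lemma~\ref{lemma:diffusion u1}), so its stability estimate depends on $\gamma$ alone and never feeds back through $\sigma_b-\wt\sigma_b$. Consequently the only things to watch are the ordering — the $\sigma_a$ estimate must be secured before bounding the middle term $(\wt\sigma_a-\sigma_a)u^{(2)}$ in the $\sigma_b$ step — and the legitimacy of every $L^\infty$ factor via Sobolev embedding.
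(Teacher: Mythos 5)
Your proposal is correct and follows essentially the same route as the paper's own proof: the identical first- and second-order data identities, the same three-term decomposition for $2(\sigma_b-\wt\sigma_b)(u^{(1)})^2$ (your difference-of-squares factoring is just the paper's splitting rewritten), Lemma~\ref{lemma:diffusion u1} as the workhorse, and the positivity $u^{(1)}\ge\varepsilon'>0$ to remove the solution factors. Your closing observation --- that the exact cancellation of the zeroth-order and source terms in the equation for $u^{(2)}-\wt u^{(2)}$ is why no smallness condition analogous to $\Pi<1$ is needed in the diffusive regime --- accurately reflects what the paper's Lemma~\ref{lemma:diffusion u1} exploits.
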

\begin{proof}
	\textbf{(1). Estimate for $\sigma_a$.} Let $u$ and $\wt u$ be the solutions to the diffusion equation corresponding to $(\gamma, \sigma_a, \sigma_b)$ and $(\wt\gamma, \wt \sigma_a, \wt\sigma_b)$ respectively. Given that the corresponding data are the same, we have
	\begin{equation}\label{EQ:Equal Data Diff}
		\sigma_a u^{(1)}= \wt\sigma_a \wt u^{(1)},
	\end{equation}
giving
	\begin{equation*}
		(\sigma_a-\wt\sigma_a)u^{(1)} = -\wt\sigma_a(u^{(1)}-\wt u^{(1)}).
	\end{equation*}
This leads to
	\begin{equation}\label{EQ:Bound Diff 2}
		\|(\sigma_a-\wt\sigma_a)u^{(1)} \|_{W^{2,p}(\Omega)}\le C\|u^{(1)}-\wt u^{(1)}\|_{W^{2,p}(\Omega)}\leq C\left\|\frac{\wt\gamma-\gamma}{\wt \gamma}\right\|_{W^{1,p}(\Omega)}
	\end{equation}
	for some constant $C>0$, by Lemma~\ref{lemma:diffusion u1}. 
This yields that 
    \begin{equation*} 
    \|\sigma_a-\wt\sigma_a\|_{L^{p}(\Omega)}\leq C\left\|\frac{\wt\gamma-\gamma}{\wt \gamma}\right\|_{W^{1,p}(\Omega)}
    \end{equation*}	
    since $u^{(1)}$ is positive and bounded away from zero provided that $\underline{g}:=\displaystyle\inf_{\partial\Omega} g>0$.

\noindent\textbf{(2). Estimate for $\sigma_b$.}
Similarly, given the same data,
\begin{equation*}
\sigma_a u^{(2)}+2\sigma_b u^{(1)}u^{(1)}= \wt\sigma_a \wt u^{(2)} + 2\wt\sigma_b  \wt u^{(1)} \wt u^{(1)}.
\end{equation*}
This gives
\begin{equation*}
    2(\sigma_b-\wt\sigma_b)u^{(1)}u^{(1)}= -(\sigma_a-\wt\sigma_a)u^{(2)}-\wt\sigma_a(u^{(2)}-\wt u^{(2)}) - 2\wt\sigma_b \left[( u^{(1)} - \wt u^{(1)} )u^{(1)}+ \wt u^{(1)} (u^{(1)}-\wt u^{(1)})\right].
\end{equation*}
Due to elliptic regularity, the solution $u^{(2)}$ to \eqref{EQN:2nd linearization} satisfies $\|u^{(2)}\|_{W^{2,p}(\Omega)}\leq C \| g\|_{W^{2-1/p,p}(\Omega)}^2$.
Then Lemma~\ref{lemma:diffusion u1} yields that
\begin{equation*}\begin{split}
    \|(\sigma_b-\wt\sigma_b)u^{(1)}u^{(1)}\|_{L^p(\Omega)}&\leq  C\Big(\|u^{(2)}\|_{L^\infty(\Omega)}\|\sigma_a-\wt\sigma_a\|_{L^p(\Omega)}+\|u^{(2)}-\wt u^{(2)}\|_{L^p(\Omega)}\\
    &\quad +\left\| u^{(1)}-\wt u^{(1)}\right\|_{L^\infty(\Omega)}\left(\|u^{(1)}\|_{L^p(\Omega)}+\|\wt u^{(1)}\|_{L^p(\Omega)}\right)\Big)\\
    &\leq  C\Big(1+ \|g\|_{W^{2-1/p,p}(\Omega)} + \| g\|^2_{W^{2-1/p,p}(\Omega)} \Big)\left\|\frac{\wt\gamma-\gamma}{\wt \gamma}\right\|_{W^{1,p}(\Omega)}\\
    &\leq  C\left\|\frac{\wt\gamma-\gamma}{\wt \gamma}\right\|_{W^{1,p}(\Omega)}.
\end{split}\end{equation*}
We apply $u^{(1)}\ge \eps'>0$ for some $\eps'>0$ again. This proves \eqref{EQ:Uncertainty Diff}.

\end{proof}

\section{Concluding remarks}
\label{SEC:Concl}

In this work, we studied inverse coefficient problems for a semilinear radiative transport equation as well as its diffusion approximation. The aim was to reconstruct the first- and second-order absorption coefficients and the scattering coefficient from internal functionals of the coefficients and the solutions to the equations. The main applications we have in mind are those in quantitative photoacoustic imaging of optically heterogeneous media. Using the techniques of model linearization, we derived uniqueness as well as stability results on the reconstructions. In the transport regime, our results, based on the data encoded in the full albedo operator, supplement those in~\cite{ReZh-SIAM21} where uniqueness can only be derived for the reconstruction of the absorption coefficients, not the scattering coefficient, with finite number of internal data sets. In the diffusion regime, our result improved the linearized inversion of~\cite{ReZh-SIAM18}, again with more data.

There are many aspects of our results that can be improved. For instance, our results are obtained under the assumption that the boundary sources for the transport equation are small. This is far from what is required by real-world applications. It is assumed in practice that one has sufficiently strong sources to make the second-order effect in the transport equation (i.e. the quadratic term $\sigma_b \aver{u} u$) strong enough to be detected. Up to now, we do not even have a well-posedness theory, if it exists at all, for the semilinear transport equation with large boundary data. Moreover, our result requires data encoded in the full albedo operator (or generated from a $1$-parameter family of boundary sources in the diffusive regime) to reconstruct three unknown coefficients. It would be very interesting to see if it is possible to reconstruct the three coefficients with only three data sets (possibly generated from three specially selected boundary illuminations).

For applications in uncertainty quantification, we also derived the stability of reconstructing the absorption coefficients with respect to changes in the scattering coefficient; in Theorem~\ref{THM:Uncertain ERT} and Theorem~\ref{THM:Uncertain Diff} respectively. These results show that in the case that we do not have enough data to reconstruct all the coefficients, we can focus on the reconstruction of the absorption coefficients (which are often the mostly relevant ones in practical applications) while replacing the scattering coefficient with a good value from \emph{a priori} information. The error in the reconstruction in this case will not be too bad if the value of the scattering coefficient is not very different from its true value. Numerically uncertainty quantification, that is, evaluating the size of the constants in the stability bounds in~\eqref{EQ:Uncertainty ERT} and~\eqref{EQ:Uncertainty Diff}, following for instance the methods in~\cite{ReVa-SIAM20}, would be of great practical interests.

\section*{Acknowledgments}

This work is partially supported by the National Science Foundation through grants DMS-1937254, 
DMS-1913309 and DMS-2006731.

\appendix
\section{Appendix: The well-posedness result for the transport equation}
\label{SEC:Wellposed ERT}

Here we show the well-posedness of the semilinear transport equation~\eqref{EQ:ERT TP} for small boundary data. For simplicity, we use the notation
$$
\sigma(\bx):=\sigma_a(\bx)+\sigma_s(\bx) .
$$
We denote by $d_\Omega$ the diameter of the spatial domain $\Omega$, that is, 
$$
d_\Omega:={\rm diam}(\Omega).
$$
Based on the a-priori assumptions on $\sigma_a$ and $\sigma_s$, there is some constant $\nu>0$ so that
$$
0<\nu\leq {\sigma_a(\bx) \over \sigma_a(\bx)+\sigma_s(\bx) } <1 \quad \hbox{for all }\bx\in\Omega,
$$
which implies that
$$
{\sigma_s(\bx) \over \sigma_a(\bx)+\sigma_s(\bx) } \leq 1-\nu  \quad \hbox{for all } \bx\in\Omega.
$$

Let $L_{d\xi}^p(\Gamma_{-})$ be the usual space of $L^p$ functions on $\Gamma_-$ with measure $d\xi=|\bnu(\bx) \cdot \bv|d\mu(\bx) d\bv$, $d\mu(\bx)$ being the surface Lebesgue measure on $\partial\Omega$. Then we have the following result from~\cite{EgSc-AA14}.
\begin{proposition}\label{PRO:source L2} \cite[Theorem~1.2]{EgSc-AA14}
	Let $\Omega$ be bounded with Lipschitz boundary. Suppose that $ C_\infty = \|\sigma_s d_\Omega\|_{L^\infty(X)} <+\infty$. For any $g\in L_{d\xi}^{p}(\Gamma_{-})$ and $S\in L^p(X)$, $1\le p\le +\infty$, there exists a unique solution $u$ to the radiative transport equation
	\begin{equation}\label{EQ:u source}
	\begin{array}{rcll}
	\bv \cdot \nabla u(\bx, \bv)  + \sigma_a(\bx)  u(\bx, \bv) &=& \sigma_s K (u) +S(\bx,\bv), &\text{in}\ X\, \\
	u(\bx, \bv) &= & g(\bx, \bv), &\text{on}\  \Gamma_{-}\,
	\end{array}
	\end{equation}
	and $u$ satisfies 
	$$
	\|u\|_{L^p(X)}\leq  C_2\|S\|_{L^p(X)} + \wt{c}\|g\|_{L_{d\xi}^p(\Gamma_-)},
	$$
	where $C_2:={1\over \nu c_0}$, and $\wt c:=  {1\over \sqrt{\nu c_0}}$ when $p=2$ and $\wt{c} = 1$ when $p=\infty$. Here $c_0$ is defined in~\eqref{EQ:Transport Coeff}.
\end{proposition}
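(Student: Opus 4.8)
The statement is the standard well-posedness and stability result for the linear radiative transport equation with scattering, and the plan is to reduce it to a contraction estimate along characteristics. First I would use the normalization of $\Theta$ to split the collision operator as $\sigma_s K(u)=\sigma_s K^{+}(u)-\sigma_s u$, where $K^{+}(u)(\bx,\bv):=\int_{\bbS^{d-1}}\Theta(\bv,\bv')u(\bx,\bv')\,d\bv'$ is the gain (averaging) part and the loss term has collapsed to $-\sigma_s u$ because $\int_{\bbS^{d-1}}\Theta(\bv',\bv)\,d\bv'=1$. Moving $-\sigma_s u$ to the left turns \eqref{EQ:u source} into $\bv\cdot\nabla u+\sigma u=\sigma_s K^{+}(u)+S$ with total attenuation $\sigma=\sigma_a+\sigma_s$. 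Integrating along backward characteristics (with backward exit time $\tau_-(\bx,\bv)$ and attenuation factor $e^{-\int_0^{t}\sigma}$) I would introduce the inflow-to-solution map $\cG$ and the source-to-solution map $\cA$, so that any solution satisfies the fixed-point equation $u=\cK u+\cA S+\cG g$, where $\cK:=\cA\,\sigma_s K^{+}$.

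The heart of the argument is the contraction bound for $\cK$. Two ingredients feed into it: $K^{+}$ is an $L^p$-contraction in $\bv$ for every $1\le p\le\infty$, since $\Theta(\bv,\cdot)\,d\bv'$ is a probability measure and Jensen's inequality together with $\int_{\bbS^{d-1}}\Theta(\bv,\bv')\,d\bv=1$ give $\|K^{+}h\|_{L^p(X)}\le\|h\|_{L^p(X)}$; and the key ray estimate $\int_0^{\tau_-}\sigma_s(\bx-t\bv)\,e^{-\int_0^{t}\sigma(\bx-s\bv)\,ds}\,dt\le 1-\nu$, which follows from $\sigma_s\le(1-\nu)\sigma$ and the exact identity $\int_0^{\tau_-}\sigma\,e^{-\int_0^{t}\sigma}\,dt=1-e^{-\int_0^{\tau_-}\sigma}\le 1$. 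In $L^\infty$ these combine at once to give $\|\cK\|_{L^\infty\to L^\infty}\le 1-\nu<1$, so the Banach fixed-point theorem yields existence and uniqueness, and summing the resulting Neumann series together with $\|\cA\|_{L^\infty\to L^\infty}\le 1/c_0$ (using $\sigma\ge\sigma_a\ge c_0$) produces the source constant $C_2=\tfrac{1}{\nu c_0}$.

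It remains to pin down the sharp constants at the two endpoints and then interpolate. At $p=\infty$, rather than the crude Neumann bound for the inflow term, I would invoke the maximum principle for the transport semigroup: with $S=0$, comparison of $u$ with the constant supersolution $\|g\|_{L^\infty}$ (equivalently, the representation of $u$ as an attenuated average of boundary data along scattering trajectories) forces $\|u\|_{L^\infty(X)}\le\|g\|_{L^\infty_{d\xi}(\Gamma_-)}$, i.e. $\wt{c}=1$. At $p=1$ the analogous bounds follow by duality against a transport operator of the same type. Riesz–Thorin interpolation between the $p=1$ and $p=\infty$ endpoints then covers the full range $1\le p\le\infty$, and in particular delivers the $p=2$ constant $\wt{c}=1/\sqrt{\nu c_0}$ (as the geometric mean $\wt c_\infty^{1/2}\,\wt c_1^{1/2}$); alternatively one can obtain the $p=2$ case directly from an energy identity, multiplying the equation by $u$ and integrating by parts so that $\bv\cdot\nabla u$ becomes the boundary flux $\tfrac12\int_{\partial\Omega\times\bbS^{d-1}}\bnu\cdot\bv\,u^2$, with the coercivity of $\sigma-\sigma_s K^{+}$ controlling the interior.

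I expect the endpoint constant bookkeeping to be the main obstacle. The favorable telescoping in the ray estimate runs in the upstream direction, which is exactly what makes the $L^\infty$ (row-sum) side clean; the downstream ($L^1$/column) direction does not telescope the same way, so one must route it through duality to the $L^\infty$ bound or through the $L^2$ energy and trace identity, correctly pairing the outgoing flux $\int_{\Gamma_+}|\bnu\cdot\bv|u^2$ against the incoming flux $\int_{\Gamma_-}|\bnu\cdot\bv|g^2$. This is precisely where the scattering normalization and the attenuation must be balanced without the comfort of the pointwise maximum principle, and where the sharp dependence $1/\sqrt{\nu c_0}$ rather than $1/\nu$ enters.
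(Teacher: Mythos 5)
The paper offers no proof of this proposition: it is imported verbatim from Egger--Schlottbom \cite[Theorem~1.2]{EgSc-AA14}, so there is no internal argument to compare against, and your attempt should be judged as a reconstruction of the cited result. As such it is essentially sound and close in spirit to the actual proof: splitting $K=K^{+}-I$ via the two normalizations of $\Theta$, absorbing the loss term into the total attenuation $\sigma=\sigma_a+\sigma_s$, setting up the fixed-point equation $u=\cK u+\cA S+\cG g$ along characteristics, and using the upstream telescoping $\int_0^{\tau_-}\sigma_s e^{-\int_0^t\sigma}\,dt\le(1-\nu)\bigl(1-e^{-\int_0^{\tau_-}\sigma}\bigr)\le 1-\nu$ together with the $L^p$-contractivity of $K^{+}$ are all correct, and your endpoint bookkeeping is right: the Neumann series with $\|\cA\|_{L^\infty\to L^\infty}\le 1/c_0$ gives $C_2=1/(\nu c_0)$; the constant supersolution $M=\|g\|_{L^\infty_{d\xi}(\Gamma_-)}$, for which $\sigma M-\sigma_s K^{+}M=\sigma_a M\ge 0$, combined with positivity of the Neumann series, gives $\wt c=1$ at $p=\infty$; and Riesz--Thorin between $\wt c_1=1/(\nu c_0)$ and $\wt c_\infty=1$ reproduces exactly $1/\sqrt{\nu c_0}$ at $p=2$.

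Two points need more than the gesture you give them. First, the $L^1$ endpoint genuinely cannot be run as a Neumann series: as you note, downstream the weight $\sigma_s$ sits at the foot of the ray rather than along it, so the column-sum of the kernel is only bounded by $\|\sigma_s d_\Omega\|_{L^\infty}=C_\infty$ --- this is precisely where the hypothesis $C_\infty<+\infty$, which your sketch never invokes, enters --- and $C_\infty$ need not be less than $1$. The fix you name must actually be executed: the adjoint boundary value problem (velocities reversed, kernel $\Theta^*(\bv,\bv'):=\Theta(\bv',\bv)$, homogeneous data on $\Gamma_+$) is again of the same class, because $\Theta^*$ inherits both normalizations, so the $L^\infty$ theory applies to it and duality via $\langle u,\varphi\rangle=\int_{\Gamma_-}g\,w\,d\xi$ delivers $\wt c_1$ and the $L^1$ source bound $1/(\nu c_0)$. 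Second, your contraction lives only in $L^\infty$, so for unbounded data in $L^p$, $p<\infty$, existence requires approximating $S,g$ by bounded data and passing to the limit with the a priori bound, and uniqueness in $L^p$ should itself be routed through the duality pairing rather than the fixed-point argument. Finally, a small consistency check: the direct $L^2$ energy identity you propose as an alternative produces a boundary constant of the form $1/\sqrt{2c_0}$ rather than literally $1/\sqrt{\nu c_0}$; there is no contradiction, since the stated constants need not be sharp, but it is the interpolation route, not the energy identity, that reproduces the quoted values.
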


We need the following result on the existence of positive solutions for~\eqref{EQ:u source} when $S(\bx, \bv)\equiv 0$.
\begin{proposition}\label{PRO:positive} 
	Let $S(\bx, \bv)\equiv 0$ in~\eqref{EQ:u source}, and $g\in L_{d\xi}^{\infty}(\Gamma_{-})$ be given such that $\underline g:=\inf_{\Gamma_-} g>0$. Then, under the same assumptions in Proposition~\eqref{PRO:source L2}, then there exists an $\eps'>0$ such that the solution $u$ to~\eqref{EQ:u source} satisfies
	\[
		u(\bx, \bv) \ge \eps'>0, \ \ \mbox{in}\ \ X.
	\]
\end{proposition}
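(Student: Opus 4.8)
The plan is to combine the method of characteristics with the positivity-preserving structure of the gain term. First I would put \eqref{EQ:u source} (with $S\equiv 0$) into collision form: using the normalization $\int_{\bbS^{d-1}}\Theta(\bv',\bv)\,d\bv'=1$, the loss part of $K$ collapses to $-u(\bx,\bv)$, so the equation reads
\[
\bv\cdot\nabla u+\sigma\,u=\sigma_s\int_{\bbS^{d-1}}\Theta(\bv,\bv')\,u(\bx,\bv')\,d\bv',\qquad \sigma:=\sigma_a+\sigma_s,
\]
whose right-hand side is a \emph{nonnegative} average of $u$ against the nonnegative kernel $\Theta$, weighted by $\sigma_s>0$.

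Next I would integrate along characteristics. Let $\tau_-(\bx,\bv)>0$ denote the backward exit time, i.e.\ the first $t>0$ with $\bx-t\bv\in\partial\Omega$. The mild (Duhamel) form of the solution is
\[
\begin{aligned}
u(\bx,\bv)={}&e^{-\int_0^{\tau_-}\sigma(\bx-s\bv)\,ds}\,g(\bx-\tau_-\bv,\bv)\\
&+\int_0^{\tau_-}e^{-\int_0^{t}\sigma(\bx-s\bv)\,ds}\,\sigma_s(\bx-t\bv)\int_{\bbS^{d-1}}\Theta(\bv,\bv')\,u(\bx-t\bv,\bv')\,d\bv'\,dt.
\end{aligned}
\]
This is a fixed-point identity $u=\mathcal{B}g+\mathcal{K}u$ in which both the ballistic term $\mathcal{B}g$ and the integral operator $\mathcal{K}$ preserve nonnegativity. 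Since the unique solution supplied by Proposition~\ref{PRO:source L2} coincides with the limit of the (contractive) iteration $u_{n+1}=\mathcal{B}g+\mathcal{K}u_n$ started from $u_0=0$, every iterate is nonnegative and therefore $u\ge 0$ in $X$.

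With $u\ge 0$ established, the entire integral term above is nonnegative and can simply be dropped, leaving the ballistic lower bound
\[
u(\bx,\bv)\ge e^{-\int_0^{\tau_-(\bx,\bv)}\sigma(\bx-s\bv)\,ds}\,g(\bx-\tau_-\bv,\bv).
\]
I would then bound the attenuation from below using $\sigma=\sigma_a+\sigma_s\le 2C_0$ from \eqref{EQ:Transport Coeff} together with $\tau_-\le d_\Omega=\mathrm{diam}(\Omega)$, and bound $g\ge\underline g$, to conclude
\[
u(\bx,\bv)\ge e^{-2C_0 d_\Omega}\,\underline g=:\eps'>0\qquad\text{in }X.
\]

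I expect the genuine difficulty to lie in the nonnegativity step rather than in the final bound: one must verify that the solution singled out by the $L^\infty$ well-posedness theory is exactly the positivity-preserving fixed point (equivalently, the convergent Neumann series $\sum_{n\ge 0}\mathcal{K}^n\mathcal{B}g$), and that $\tau_-$ and the attenuation exponents are finite and measurable on $X$ so that the pointwise Duhamel manipulations hold almost everywhere; convexity of $\Omega$ guarantees here that each backward characteristic meets $\partial\Omega$ exactly once. Once positivity is secured, the remaining estimates are elementary.
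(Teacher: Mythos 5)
Your proposal is correct and follows essentially the same route as the paper: rewrite the equation in collision form with total cross-section $\sigma=\sigma_a+\sigma_s$, integrate along characteristics to get the Duhamel representation, drop the nonnegative scattering integral, and bound the ballistic term below by $\underline g\,e^{-d_\Omega\overline\sigma}$ (your $e^{-2C_0 d_\Omega}\underline g$ is the same bound with the cruder estimate $\overline\sigma\le 2C_0$). The only difference is that where the paper cites standard transport theory for the nonnegativity $u\ge 0$, you supply the argument yourself via the positivity-preserving Neumann-series iteration, which is a valid filling-in of that cited step.
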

\begin{proof}
Proposition~\ref{PRO:source L2} ensures that there exists a unique solution $u$ satisfying
\[
	\|u\|_{L^\infty(X)}\le \|g\|_{L_{d\xi}^\infty(\Gamma_-)}.
\]
From standard transport theory~\cite{DaLi-Book93-6}, we know also that $u\ge 0$. Let us re-write~\eqref{EQ:u source}, with $S\equiv 0$, into the form
\begin{equation*}
	\begin{array}{rcll}
	\bv \cdot \nabla u(\bx, \bv)  + (\sigma_a+\sigma_s)(\bx)  u(\bx, \bv) &=& \sigma_s \dint_{\bbS^{d-1}} \Theta(\bv, \bv') u(\bx, \bv') d\bv', &\text{in}\ X\, \\
	u(\bx, \bv) &= & g(\bx, \bv), &\text{on}\  \Gamma_{-}\,.
	\end{array}
\end{equation*}
We can then integrate the equation by the method of characteristics to obtain that
\begin{align*}
	u(\bx,\bv)&= e^{-\int^{\tau_-(\bx,\bv)}_0 \sigma  (\bx-\eta\bv)d\eta} g(\bx-\tau_-(\bx,\bv)\bv,\bv)\\
	&\quad + \int^{\tau_-(\bx,\bv)}_0 \sigma_s(\bx-s\bv) e^{-\int^s_0\sigma (\bx-\eta\bv)d\eta} \int_{\mathbb{S}^{d-1}}  \Theta(\bv,\bv') u(\bx-s\bv,\bv')d\bv' ds
\end{align*} 
where $\sigma:=\sigma_a+\sigma_s$. Using $u\ge 0$ and $\Theta\geq 0$, we conclude that the second term is nonnegative. Therefore
\begin{align*}
	u(\bx,\bv)\ge e^{-\int^{\tau_-(\bx,\bv)}_0 \sigma  (\bx-\eta\bv)d\eta} g(\bx-\tau_-(\bx,\bv)\bv,\bv) \ge \underline g\, e^{-d_\Omega\overline\sigma }
\end{align*}
where $\overline\sigma:=\sup_{\Omega}\sigma$. The proof is complete if we define $\eps' :=\underline g\, e^{-d_\Omega  \overline \sigma}$. 
\end{proof} 


We have the following well-posedness result for~\eqref{EQ:ERT TP} with small data.
\begin{theorem}\label{THM:WELL}
	Let $\Omega\subset\bbR^d$ ($d\ge 2$) be an open convex bounded domain. Suppose that $\sigma_a,\sigma_b,\sigma_s$ satisfy~\eqref{EQ:Transport Coeff}. Then there exists a small parameter $0<\varepsilon<1$ such that when 
	$$
	g\in \mathcal{X}_\varepsilon:=\{g\in L_{d\xi}^\infty(\Gamma_-):\, \|g\|_{L_{d\xi}^\infty(\Gamma_-)}\leq \varepsilon\},
	$$
	the problem \eqref{EQ:ERT TP} has a unique small solution $u\in L^\infty(X)$ satisfying
	$$
	\|u\|_{L^\infty(X)}\leq C \|g\|_{L_{d\xi}^\infty(\Gamma_-)},
	$$
	with the constant $C>0$ being independent of $u$ and $g$.
\end{theorem}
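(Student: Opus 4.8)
The plan is to recast \eqref{EQ:ERT TP} as a fixed-point problem and apply the Banach contraction principle, using the linear solvability and a priori bound of Proposition~\ref{PRO:source L2} with $p=\infty$. Concretely, for a given $w\in L^\infty(X)$ I would move the semilinear term to the right-hand side and define $u=\Phi(w)$ as the unique solution of the \emph{linear} transport problem
\begin{equation*}
\bv\cdot\nabla u+\sigma_a u=\sigma_s K(u)-\sigma_b\aver{w}w \quad\text{in }X,\qquad u=g\ \text{on }\Gamma_-,
\end{equation*}
which is well-defined because the source $S:=-\sigma_b\aver{w}w$ lies in $L^\infty(X)$. A fixed point of $\Phi$ is exactly a solution of \eqref{EQ:ERT TP}.

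The key elementary estimate is that, since $d\bv$ is normalized, $|\aver{w}|\le\|w\|_{L^\infty(X)}$, so that $\|S\|_{L^\infty(X)}\le C_0\|w\|_{L^\infty(X)}^2$. Combined with the bound $\|u\|_{L^\infty(X)}\le C_2\|S\|_{L^\infty(X)}+\|g\|_{L_{d\xi}^\infty(\Gamma_-)}$ from Proposition~\ref{PRO:source L2} (where $\wt c=1$ for $p=\infty$), this gives $\|\Phi(w)\|_{L^\infty(X)}\le C_2C_0\|w\|_{L^\infty(X)}^2+\|g\|_{L_{d\xi}^\infty(\Gamma_-)}$. I would then fix the radius $R=2\varepsilon$ and check that $\Phi$ maps the ball $B_R=\{w:\|w\|_{L^\infty(X)}\le R\}$ into itself: when $\|g\|_{L_{d\xi}^\infty(\Gamma_-)}\le\varepsilon$, the requirement $C_2C_0R^2+\varepsilon\le R$ reduces to $4C_2C_0\varepsilon\le 1$, which holds for $\varepsilon$ small.

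For the contraction, I would take $w_1,w_2\in B_R$ and note that $\Phi(w_1)-\Phi(w_2)$ solves the same linear equation with zero boundary data and source $-\sigma_b(\aver{w_1}w_1-\aver{w_2}w_2)$. Writing $\aver{w_1}w_1-\aver{w_2}w_2=\aver{w_1}(w_1-w_2)+\aver{w_1-w_2}\,w_2$ and using $|\aver{\cdot}|\le\|\cdot\|_{L^\infty(X)}$ yields the bilinear bound $\|\aver{w_1}w_1-\aver{w_2}w_2\|_{L^\infty(X)}\le 2R\|w_1-w_2\|_{L^\infty(X)}$. Proposition~\ref{PRO:source L2} then gives $\|\Phi(w_1)-\Phi(w_2)\|_{L^\infty(X)}\le 2C_2C_0R\,\|w_1-w_2\|_{L^\infty(X)}$, and with $R=2\varepsilon$ the Lipschitz constant $4C_2C_0\varepsilon$ is strictly less than $1$ for $\varepsilon$ small. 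The Banach fixed-point theorem then produces a unique $u\in B_R$ solving \eqref{EQ:ERT TP}. Finally, plugging the fixed point into the self-map bound gives $\|u\|_{L^\infty(X)}\le C_2C_0\|u\|_{L^\infty(X)}^2+\|g\|_{L_{d\xi}^\infty(\Gamma_-)}$, and since $C_2C_0\|u\|_{L^\infty(X)}\le\tfrac12$ for $\varepsilon$ small, this self-improves to $\|u\|_{L^\infty(X)}\le 2\|g\|_{L_{d\xi}^\infty(\Gamma_-)}$, i.e.\ $C=2$.

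The only real obstacle is the quadratic nonlinearity $\sigma_b\aver{u}u$: it is precisely why smallness of $g$ is needed, since a quadratic map is never globally contractive. The whole argument hinges on choosing $R$ comparable to $\varepsilon$ so that both the invariance inequality $C_2C_0R^2+\varepsilon\le R$ and the contraction inequality $2C_2C_0R<1$ are governed by the single smallness threshold $\varepsilon<1/(4C_2C_0)$, which I would record explicitly. One should also note that the claimed uniqueness is uniqueness within the small ball $B_R$ (which is the meaning of ``small solution'' in the statement), rather than among all $L^\infty(X)$ solutions.
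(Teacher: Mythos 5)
Your proof is correct and takes essentially the same route as the paper: both are Banach fixed-point arguments built on the linear $L^\infty$ estimate of Proposition~\ref{PRO:source L2} (with $p=\infty$, $\wt c=1$), treating the quadratic term $\sigma_b\aver{u}u$ as a source and contracting on a ball of radius comparable to $\varepsilon$, with the same bilinear splitting for the contraction step. The only immaterial difference is that you iterate on $u$ directly, carrying the boundary data $g$ in each linear solve, whereas the paper first subtracts the linear solution $u_0$ and runs the contraction on the zero-trace remainder $w=u-u_0$.
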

\begin{proof}
We first consider the linear equation
\begin{equation*}
	\begin{array}{rcll}
		\bv \cdot \nabla u_0 + \sigma_a u_0 &=& \sigma_s K (u_0), &\text{in}\ X\, \\
	u_0 &= & g, &\text{on}\ \Gamma_{-}\, .
	\end{array}
	\end{equation*}
	By Proposition~\ref{PRO:source L2} with $p=\infty$, there exists a unique solution $u_0$ that satisfies
	\begin{align}\label{estimate u0}
	\|u_0\|_{L^\infty(X)}\leq \|g\|_{L_{d\xi}^\infty(\Gamma_-)}.
	\end{align}

	Let us now consider $w:=u-u_0$. If such function $w$ exists, then $w$ satisfies the problem:
	\begin{equation}\label{EQ: linear remainder}
	\begin{array}{rcll}
	\bv \cdot \nabla w + \sigma_a w &=& \sigma_s K (w) - G(w), &\text{in}\ X\, \\
	w &= & 0, &\text{on}\ \Gamma_{-}\,
	\end{array}
	\end{equation}
	with
	$$
	G(w):= \sigma_b \aver{u_0+w} (u_0+w).
	$$
	The problem is now to show the unique existence of $w$ to \eqref{EQ: linear remainder}. To this end, we will construct a contraction map and then apply the Contraction Mapping Principle. We first introduce the set of functions:
	$$
	\mathcal{M}:=\{\phi\in L^\infty(X):\, \phi|_{\Gamma_-}=0,\, \|\phi\|_{L^\infty(X)}\leq \delta\},
	$$
	where parameter $\delta$ will be determined later.
	For $\phi\in L^\infty(X)$, the source term $G(\phi)$ is also in $L^\infty(X)$. Therefore, the problem
	\begin{equation}\label{EQ: linear remainder 1}
	\begin{array}{rcll}
	\bv \cdot \nabla \wt w + \sigma_a \wt w &=& \sigma_s K (\wt w) - G(\phi), &\text{in}\ X\, \\
	\wt w &= & 0, &\text{on}\  \Gamma_{-}\,
	\end{array}
	\end{equation}
	is uniquely solvable due to Proposition~\ref{PRO:source L2}. We can therefore define the solution operator
	$$
	\mathcal{T}^{-1}: G(\phi)\in L^\infty(X)\mapsto \wt w\in L^\infty(X)
	$$
	to \eqref{EQ: linear remainder 1}. Moreover, by Proposition~\ref{PRO:source L2} again, we have
	\begin{align}\label{solution operator}
	\|\mathcal{T}^{-1}(G(\phi)) \|_{L^\infty(X)} \leq C_2 \|G(\phi)\|_{L^\infty(X)}.
	\end{align}

	Let us define the operator $F$ by 
	$$
		F(\phi):=\mathcal{T}^{-1}(G(\phi)) 
	$$
	for any $\phi\in\mathcal{M}$. In what follows, we will show that $F$ is contractive on the set $\mathcal{M}$ for appropriate parameter $\delta$.
	
	In the first step, we show that $F(\mathcal{M})\subset \mathcal{M}$. In fact, for any $\phi\in \mathcal{M}$, we have, by \eqref{solution operator}, that
	\begin{align}\label{contraction 1}
	\|F(\phi)\|_{L^\infty(X)} 
	&\leq C_2\|G(\phi)\|_{L^\infty(X)} \notag\\
	&\leq C_2\|\sigma_b \aver{u_0+\phi} (u_0+\phi)\|_{L^\infty(X)}\notag\\
	&\leq C_2C_0 (\varepsilon+\delta)^2
	\end{align}
	where $C_0$ is the constant introduced in~\eqref{EQ:Transport Coeff}. We can then take $\varepsilon, \delta$ sufficiently small so that $C_2C_0 (\varepsilon+\delta)^2<\delta$. This yields $F(\phi)\in \mathcal{M}$.
	
	In the second step, we show that $F$ is contractive on $\mathcal{M}$, that is, $\|F(\phi_1)-F(\phi_2)\|_{L^\infty(X)}<\|\phi_1-\phi_2\|_{L^\infty(X)}$ for any $\phi_1,\phi_2\in \mathcal{M}$. This follows from the following calculation:
	\begin{align*}
	\|F(\phi_1)-F(\phi_2)\|_{L^\infty(X)}
	&\leq C_2 \|G(\phi_1)-G(\phi_2)\|_{L^\infty(X)}\\
	&\leq C_2 \|\sigma_b \aver{u_0+\phi_1} (u_0+\phi_1)-\sigma_b \aver{u_0+\phi_2} (u_0+\phi_2)\|_{L^\infty(X)}\\
	&\leq C_2C_0 (\varepsilon+\delta)\|\phi_1-\phi_2\|_{L^\infty(X)}.
	\end{align*}
	By taking $\varepsilon$ and $\delta$ small enough, we can make $C_2C_0(\varepsilon+\delta)<1$. In this case, $F$ is contractive on $\mathcal{M}$.
	
	By applying the Contraction Mapping Principle, there exists a fixed point $w$ in $\mathcal{M}$ so that $F(w)=w$. Then $w$ is the solution to \eqref{EQ: linear remainder} and satisfies
	$$
	\|w\|_{L^\infty(X)}\leq C_2(\varepsilon+\delta)(\|u_0\|_{L^\infty(X)}+ \|w\|_{L^\infty(X)}) 
	$$
	due to \eqref{contraction 1}. By taking $\varepsilon, \delta$ even smaller if needed,  
	we have $C_2(\varepsilon+\delta)\|w\|_{L^\infty(X)}$ can be absorbed into the left-hand side, and thus,
	$$
	\|w\|_{L^\infty(X)}\leq C\|u_0\|_{L^\infty(X)}.
	$$
	We then conclude that $u=u_0+w$ is the solution to \eqref{EQ:ERT TP} and in particular, 
	$$
	\|u\|_{L^\infty(X)}\leq \|u_0\|_{L^\infty(X)}+\|w\|_{L^\infty(X)}\leq C \|u_0\|_{L^\infty(X)} \leq C \|g\|_{L_{d\xi}^\infty(\Gamma_-)}
	$$
	by combining \eqref{estimate u0} and the estimate above.
\end{proof}

In the following, we discuss briefly the differentiability of the solution.
For a nonzero $g \in L_{d\xi}^\infty(\Gamma_-)$ and small enough $\varepsilon_0>0$, let $u_\varepsilon = u(\bx,\bv;\varepsilon)$ be the solution to the problem \eqref{EQ:Semil ERT} with boundary data $\varepsilon g\in \mathcal{X}_{\varepsilon_0}$. 

We define the $k$-th derivative of $u_\varepsilon$ with respect to (w.r.t.) $\varepsilon$ by $u^{(k)}_\varepsilon: = \p^k_\varepsilon u_{\varepsilon}(\bx,\bv;\varepsilon)$ for $k=1,2$. In particular, the $k$-th derivative of $u_\varepsilon$ at $\varepsilon=0$ is denoted by $u^{(k)}$, instead of $u^{(k)}_\varepsilon|_{\varepsilon=0}=\p^k_\varepsilon u_\varepsilon |_{\varepsilon=0}$ for simplicity.  
We also define the linear operator $L$ by
$$
    L u := \bv \cdot \nabla u+ \sigma_a(\bx)u - \sigma_s(\bx) K(u).
$$

\begin{proposition}\label{Pro:differentiability} 
For $\varepsilon$ sufficiently small, $u^{(1)}_\varepsilon$ exists and satisfies
\begin{equation}\label{EQN:u1 epsilon}
	\begin{array}{rcll}
	L u^{(1)}_\varepsilon(\bx, \bv) + \sigma_b\aver{u_\varepsilon}  u^{(1)}_\varepsilon (\bx, \bv) +\sigma_b\aver{u^{(1)}_\varepsilon} u_\varepsilon(\bx, \bv) &=& 0 , &\text{in}\ X \\
		u^{(1)}_\varepsilon(\bx, \bv) &= & g(\bx, \bv), &\text{on}\ \Gamma_{-}.
	\end{array}
\end{equation}
Moreover, $u^{(2)}_\varepsilon$ exists and satisfies
\begin{equation}\label{EQN:u2 epsilon}
	\begin{array}{rcll}
		L u^{(2)}_\varepsilon(\bx, \bv) + \sigma_b\aver{u_\varepsilon}  u^{(2)}_\varepsilon (\bx, \bv) +\sigma_b\aver{u^{(2)}_\varepsilon}  u_\varepsilon(\bx, \bv) &=& -2\sigma_b \aver{u^{(1)}_\varepsilon}  u^{(1)}_\varepsilon(\bx,\bv) , &\text{in}\ X \\
		u^{(2)}_\varepsilon(\bx, \bv) &= & 0, &\text{on}\ \Gamma_{-}.
	\end{array}
\end{equation}
In particular, $u^{(1)}$ satisfies \eqref{EQ:u_1 0} and $u^{(2)}$ satisfies \eqref{EQ:ERT TP eps2}.
\end{proposition}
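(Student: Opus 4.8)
The plan is to establish the existence of $u^{(1)}_\varepsilon$ and $u^{(2)}_\varepsilon$ through a difference-quotient argument, using the well-posedness machinery of Theorem~\ref{THM:WELL} at each stage, and then to read off the displayed equations by formal differentiation, which the difference-quotient estimates rigorously justify. First I would record the two consequences of the contraction-mapping construction in Theorem~\ref{THM:WELL} that drive everything: for $\varepsilon$ small the solution map $\varepsilon\mapsto u_\varepsilon$ is well-defined with $\|u_\varepsilon\|_{L^\infty(X)}\le C\varepsilon$, and it is Lipschitz in $\varepsilon$, i.e. $\|u_{\varepsilon+h}-u_\varepsilon\|_{L^\infty(X)}\le C|h|$. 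The Lipschitz bound follows by subtracting the two fixed-point equations for $u_{\varepsilon+h}$ and $u_\varepsilon$ and absorbing the quadratic terms using smallness, exactly as in the contraction step of Theorem~\ref{THM:WELL}.

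Next I would verify that the candidate first-order equation \eqref{EQN:u1 epsilon} is itself well-posed. Although it is linear in the unknown $u^{(1)}_\varepsilon$, it is not a standard transport equation because of the nonlocal coupling term $\sigma_b\aver{u^{(1)}_\varepsilon}u_\varepsilon$. I would treat $\sigma_b\aver{u_\varepsilon}u^{(1)}_\varepsilon+\sigma_b\aver{u^{(1)}_\varepsilon}u_\varepsilon$ as a perturbation: writing $u^{(1)}_\varepsilon=\phi_0+\psi$ with $L\phi_0=0$ and $\phi_0|_{\Gamma_-}=g$, the remainder $\psi$ solves an equation of the form $L\psi=-\sigma_b\aver{u_\varepsilon}(\phi_0+\psi)-\sigma_b\aver{\phi_0+\psi}u_\varepsilon$ with zero incoming data. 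Since the coefficients carry a factor $u_\varepsilon=\mathcal{O}(\varepsilon)$, Proposition~\ref{PRO:source L2} with $p=\infty$ makes the associated solution operator a contraction for $\varepsilon$ small, giving a unique $u^{(1)}_\varepsilon\in L^\infty(X)$.

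With the target equation solvable, I would prove convergence of the difference quotients $v_h:=(u_{\varepsilon+h}-u_\varepsilon)/h$. Subtracting the semilinear equations for $u_{\varepsilon+h}$ and $u_\varepsilon$ and using the algebraic identity $\aver{u_{\varepsilon+h}}u_{\varepsilon+h}-\aver{u_\varepsilon}u_\varepsilon=h\aver{u_{\varepsilon+h}}v_h+h\aver{v_h}u_\varepsilon$, one finds that $v_h$ solves $Lv_h+\sigma_b\aver{u_{\varepsilon+h}}v_h+\sigma_b\aver{v_h}u_\varepsilon=0$ with $v_h|_{\Gamma_-}=g$. The error $e_h:=v_h-u^{(1)}_\varepsilon$ then satisfies a transport equation of the same well-posed type with source $-\sigma_b\aver{u_{\varepsilon+h}-u_\varepsilon}u^{(1)}_\varepsilon$, whose $L^\infty(X)$ norm is $\mathcal{O}(|h|)$ by the Lipschitz bound of the first step; the stability estimate therefore yields $\|e_h\|_{L^\infty(X)}\le C|h|\to0$. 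Hence $u^{(1)}_\varepsilon=\lim_{h\to0}v_h$ exists and satisfies \eqref{EQN:u1 epsilon}. Iterating the same scheme — Lipschitz dependence of $\varepsilon\mapsto u^{(1)}_\varepsilon$, well-posedness of the linearization, and an error estimate for the difference quotients of $u^{(1)}_\varepsilon$ — produces $u^{(2)}_\varepsilon$ and \eqref{EQN:u2 epsilon}, the extra source $-2\sigma_b\aver{u^{(1)}_\varepsilon}u^{(1)}_\varepsilon$ arising from differentiating the products $\aver{u_\varepsilon}u^{(1)}_\varepsilon$ and $\aver{u^{(1)}_\varepsilon}u_\varepsilon$. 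Finally, evaluating at $\varepsilon=0$ and using $u_0=0$ kills the terms $\sigma_b\aver{u_0}u^{(k)}$ and $\sigma_b\aver{u^{(k)}}u_0$, reducing \eqref{EQN:u1 epsilon} to \eqref{EQ:u_1 0} and \eqref{EQN:u2 epsilon} to \eqref{EQ:ERT TP eps2}.

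I expect the main obstacle to be the rigorous passage from formal differentiation to genuine differentiability, i.e. the third step: one must simultaneously control the nonlocal coupling in the linearized operator and show that the difference quotients actually converge, rather than merely that a formal derivative solves the expected equation. The smallness $\|u_\varepsilon\|_{L^\infty(X)}=\mathcal{O}(\varepsilon)$ is precisely what makes both the well-posedness of the linearizations and the contraction estimates go through, so tracking this smallness uniformly across the two linearization steps is the delicate point.
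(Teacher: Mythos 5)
Your proposal is correct and follows essentially the same route as the paper's proof: both form the difference quotients $v_h=(u_{\varepsilon+h}-u_\varepsilon)/h$, observe that they solve a linear transport problem whose stability estimate (Proposition~\ref{PRO:source L2}, with the $\cO(\varepsilon)$ nonlocal coupling terms absorbed perturbatively) gives the Lipschitz bound $\|u_{\varepsilon+h}-u_\varepsilon\|_{L^\infty(X)}\le C|h|$, and then show that the error $e_h=v_h-u^{(1)}_\varepsilon$ solves a zero-boundary-data problem with an $\cO(|h|)$ source, so $v_h\to u^{(1)}_\varepsilon$ in $L^\infty(X)$, iterating once more for $u^{(2)}_\varepsilon$ and setting $\varepsilon=0$ (using $u_0=0$) at the end. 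Your explicit contraction argument for the well-posedness of the linearized equation \eqref{EQN:u1 epsilon} spells out a step the paper leaves implicit, but it does not change the method.
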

\begin{proof}
    Let $\Delta \varepsilon\neq 0$ and let $\tilde{u} = {u_{\varepsilon+\Delta\varepsilon} - u_\varepsilon \over \Delta \varepsilon}$, where $u_{\varepsilon+\Delta\varepsilon},u_\varepsilon\in\mathcal{X}_{\varepsilon_0}$. Then $\tilde{u}$ satisfies the linear transport equation with zero source
    \begin{equation*} 
    	\begin{array}{rcll}
    		L \tilde{u} (\bx, \bv) + \sigma_b\aver{\tilde{u}} u_{\varepsilon+\Delta\varepsilon}(\bx, \bv) +\sigma_b \aver{u_\varepsilon} \tilde{u}(\bx, \bv)   &=& 0 , &\text{in}\ X \\
    		\tilde{u}(\bx, \bv)  &= & g(\bx, \bv), &\text{on}\ \Gamma_{-}.
    	\end{array}
    \end{equation*}
Thus from Proposition~\ref{PRO:source L2}, we have
$$
    \|\tilde{u}\|_{L^\infty(X)}\leq C  \|g\|_{L_{d\xi}^\infty(\Gamma_-)},
$$
which yields that
$$
    \|u_{\varepsilon+\Delta\varepsilon} - u_\varepsilon \|_{L^\infty(X)}\leq C|\Delta \varepsilon| \|g\|_{L_{d\xi}^\infty(\Gamma_-)}.
$$
Let $w=\tilde{u}-v$, where $v$ is the solution to \eqref{EQN:u1 epsilon}. Then $w$ satisfies 
\begin{equation*} 
	\begin{array}{rcll}
		L w(\bx, \bv) + \sigma_b \aver{u_\varepsilon} w(\bx, \bv) +\sigma_b\aver{w}  u_\varepsilon(\bx, \bv) &=& -\sigma_b\aver{\tilde{u}}  (u_{\varepsilon+\Delta\varepsilon}-u_\varepsilon)(\bx, \bv) , &\text{in}\ X \\
		w (\bx, \bv)&= & 0, &\text{on}\ \Gamma_{-} 
	\end{array}
\end{equation*}
and also
$$
    \|w\|_{L^\infty(X)}\leq C  \|\sigma_b(u_{\varepsilon+\Delta\varepsilon}-u_\varepsilon)\aver{\tilde{u}}\|_{L^\infty(X)}\leq C |\Delta \varepsilon| \|g\|^2_{L_{d\xi}^\infty(\Gamma_-)}.
$$
Therefore when $\Delta \varepsilon\rightarrow 0$, $\tilde{u}$ converges to $v$ in $L^\infty(X)$, which implies that $u_\varepsilon$ is differentiable w.r.t. $\varepsilon$ and thus $u^{(1)}_\varepsilon=v$ exists.   
 
Let $\tilde{u}^{(1)} = {u^{(1)}_{\varepsilon+\Delta\varepsilon} - u^{(1)}_\varepsilon \over \Delta \varepsilon}$. Following a similar argument as above, we can also derive that $\tilde{u}^{(1)}$ converges in $L^\infty(X)$ and thus $u^{(2)}_\varepsilon$ exists. This completes the proof.
\end{proof}


\section{Appendix: The well-posedness result for the diffusion equation}
\label{SEC:Wellposed Diff}

Here we establish the well-posedness result for the boundary value problem~\eqref{EQ:Diff TP} with small boundary data. Let $\Omega$ be a bounded domain in $\bbR^d$ with smooth boundary $\partial\Omega$. We have the following theorem.
\begin{theorem}\label{THM:Wellposed Diff}
Assume that $\gamma, \sigma_a, \sigma_b$ satisfy \eqref{EQ:Diffusion Coeff}.
Let $p\in\bbR_+$ be such that $p>d$. Then there exists a small parameter $0<\varepsilon<1$ such that when  $f\in W^{2-1/p,p}(\partial\Omega)$ satisfies $\|f\|_{W^{2-1/p,p}(\partial\Omega)}\leq\varepsilon$,
the problem \eqref{EQ:Diff TP} has a unique solution $u\in W^{2,p}(\Omega)$ satisfying
\[
	\|u\|_{W^{2,p}(\Omega)}\leq c\|f\|_{W^{2-1/p,p}(\partial\Omega)}
\]
for some constant $c>0$ independent of $u$ and $f$. 
\end{theorem}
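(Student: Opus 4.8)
The plan is to mirror the fixed-point scheme already used for the transport equation in Theorem~\ref{THM:WELL}, exploiting the hypothesis $p>d$ so that the quadratic nonlinearity $\sigma_b u^2$ can be controlled in $L^p(\Omega)$ by a product of an $L^\infty$-norm and an $L^p$-norm, both small for small data. First I would dispose of the linear part. Since $\gamma\ge c_0>0$ and $\sigma_a\ge c_0>0$ by~\eqref{EQ:Diffusion Coeff}, the bilinear form associated with the operator $-\nabla\cdot\gamma\nabla(\cdot)+\sigma_a(\cdot)$ is coercive, so the linear Dirichlet problem
\[
-\nabla\cdot\gamma\nabla u_0+\sigma_a u_0=0\ \text{ in }\ \Omega,\qquad u_0=f\ \text{ on }\ \partial\Omega
\]
has a unique weak solution; because $\gamma\in\cC^2(\overline\Omega)$ and $\partial\Omega$ is smooth, standard $L^p$ elliptic regularity lifts it to $u_0\in W^{2,p}(\Omega)$ with $\|u_0\|_{W^{2,p}(\Omega)}\le c\|f\|_{W^{2-1/p,p}(\partial\Omega)}$.

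Writing $u=u_0+w$, the remainder $w$ must solve the homogeneous-boundary problem
\[
-\nabla\cdot\gamma\nabla w+\sigma_a w=-\sigma_b(u_0+w)^2\ \text{ in }\ \Omega,\qquad w=0\ \text{ on }\ \partial\Omega.
\]
I would set up a contraction on $\mathcal{M}:=\{\phi\in W^{2,p}(\Omega):\phi|_{\partial\Omega}=0,\ \|\phi\|_{W^{2,p}(\Omega)}\le\delta\}$ by letting $F(\phi)$ be the unique $W^{2,p}$ solution of the \emph{linear} problem with source $-\sigma_b(u_0+\phi)^2$; linear solvability and the bound $\|F(\phi)\|_{W^{2,p}(\Omega)}\le C\|\sigma_b(u_0+\phi)^2\|_{L^p(\Omega)}$ come again from elliptic regularity. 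To close the argument I would invoke the Sobolev embedding $W^{2,p}(\Omega)\hookrightarrow L^\infty(\Omega)$, valid since $p>d$. For self-mapping, estimate $\|(u_0+\phi)^2\|_{L^p(\Omega)}\le\|u_0+\phi\|_{L^\infty(\Omega)}\|u_0+\phi\|_{L^p(\Omega)}\le C(\varepsilon+\delta)^2$, giving $\|F(\phi)\|_{W^{2,p}(\Omega)}\le CC_0(\varepsilon+\delta)^2<\delta$ once $\varepsilon,\delta$ are small. For contractivity, use the factorization $(u_0+\phi_1)^2-(u_0+\phi_2)^2=(\phi_1-\phi_2)(2u_0+\phi_1+\phi_2)$ together with the same embedding to obtain $\|F(\phi_1)-F(\phi_2)\|_{W^{2,p}(\Omega)}\le CC_0(\varepsilon+\delta)\|\phi_1-\phi_2\|_{W^{2,p}(\Omega)}$, a strict contraction for $\varepsilon,\delta$ small. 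The Contraction Mapping Principle then yields a unique fixed point $w\in\mathcal{M}$, and $u=u_0+w$ is the sought solution, with $\|u\|_{W^{2,p}(\Omega)}\le\|u_0\|_{W^{2,p}(\Omega)}+\|w\|_{W^{2,p}(\Omega)}\le c\|f\|_{W^{2-1/p,p}(\partial\Omega)}$.

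The main obstacle, though a mild one here, is controlling the quadratic term $u^2$ in $L^p$: this is precisely where $p>d$ is indispensable, since it guarantees the embedding $W^{2,p}(\Omega)\hookrightarrow L^\infty(\Omega)$ (indeed into $\cC^{1,1-d/p}(\overline\Omega)$) that converts every nonlinear estimate into a product of a sup-norm and an $L^p$-norm, both of which are $\cO(\varepsilon+\delta)$ and hence small. One subtlety worth checking is that uniqueness holds in the full class $W^{2,p}(\Omega)$ of small solutions, not merely inside the ball $\mathcal{M}$: if $u,\bar u$ are two such solutions, their difference solves a linear homogeneous-boundary problem whose source factors as $-\sigma_b(u-\bar u)(u+\bar u)$, so elliptic regularity together with the embedding forces $\|u-\bar u\|_{W^{2,p}(\Omega)}\le CC_0(\varepsilon+\delta)\|u-\bar u\|_{W^{2,p}(\Omega)}$, whence $u=\bar u$ for $\varepsilon,\delta$ small.
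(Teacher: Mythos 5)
Your proposal is correct and follows essentially the same route as the paper's proof: decompose $u=u_0+w$ with $u_0$ solving the linear problem, then run a Banach fixed-point argument on the ball of zero-trace $W^{2,p}(\Omega)$ functions, using $L^p$ elliptic regularity and the embedding $W^{2,p}(\Omega)\hookrightarrow\cC^{1,1-d/p}(\overline\Omega)$ for $p>d$, with the identical factorization $(u_0+\phi_1)^2-(u_0+\phi_2)^2=(\phi_1-\phi_2)(2u_0+\phi_1+\phi_2)$ for contractivity. Your closing remark establishing uniqueness among all small $W^{2,p}(\Omega)$ solutions, rather than only within the contraction ball, is a minor refinement that the paper's proof does not spell out, but it does not alter the approach.
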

\begin{proof}
Similar as the transport case above, we apply the standard Contraction Mapping Theorem. We define the set of functions:
\[\mathcal{M}_D:=\{v\in W^{2,p}(\Omega)~|~ v|_{\p\Omega}=0 ,\,\|v\|_{W^{2,p}(\Omega)} \leq \delta\},\]
where $\delta>0$ will be determined later.
By standard theory on the well-posedness of linear elliptic equations (see for instance~\cite[Theorem 9.15 and Lemma 9.17]{GiTr-Book00}), we have that for $S(\bx) \in L^p(\Omega)$, the second-order equation
\begin{equation}\label{eqn:linear_diffusion}
\begin{array}{rcll}
-\nabla\cdot\gamma\nabla v+\sigma_a v&=&S, & \mbox{in}\ \ \Omega\\
v&=&f, & \mbox{on}\ \partial\Omega
\end{array}
\end{equation}
admits a unique solution $v\in W^{2,p}(\Omega)$ satisfying 
\[
\|v\|_{W^{2,p}(\Omega)}\leq c(\|f\|_{W^{2-1/p,p}(\partial\Omega)}+\|S\|_{L^p(\Omega)})
\]
for some constant $c>0$. Let $u_0$ be the solution to~\eqref{eqn:linear_diffusion} with $S=0$ and set $w=u-u_0$. Then we are set to find $w\in \mathcal{M}_D$ for $\delta>0$ small enough such that 
\[
\begin{array}{rcll}
-\nabla\cdot\gamma\nabla w+\sigma_a w&=&G_D(w), & \mbox{in}\ \ \Omega\\
w & = & 0, & \mbox{on}\ \partial\Omega
\end{array}
\]
where  
$$
    G_D(w):=-\sigma_b {(u_0+w)^2}.  
$$
This is equivalent to find a fixed point in $\mathcal{M}_D$ to the contractive operator $F_D:=\mathcal{T}_D^{-1}\circ G_D$, where $\mathcal{T}_D^{-1}: L^p(\Omega)\rightarrow W^{2,p}(\Omega)$ denotes the bounded operator $S\mapsto u_S$ with $u_S$ being the solution of \eqref{eqn:linear_diffusion} with $f=0$. 

We first show that $F_D(\mathcal{M}_D)\subset \mathcal{M}_D$. By the Sobolev Embedding Theorem, when $p>d$, we have $W^{2,p}(\Omega)\hookrightarrow \cC^{1,1-d/p}(\overline\Omega)$. Therefore, for $\phi\in \mathcal{M}_D$, we obtain
\begin{equation}\label{eqn:F_est}
	\begin{split}
		\|G_D(\phi)\|_{L^p(\Omega)}&\leq c \|u_0+\phi\|_{L^\infty(\Omega)}\|u_0+\phi\|_{L^p(\Omega)}\\
		&\leq c\|u_0+\phi\|_{W^{2,p}(\Omega)}^2\\
		&\leq c(\|f\|_{W^{2-1/p,p}(\p\Omega)}^2+\|\phi\|_{W^{2,p}(\Omega)}^2) \leq c(\varepsilon^2+\delta^2).
	\end{split}
\end{equation}
Hence, 
\[
	\|F_D(\phi)\|_{W^{2,p}(\Omega)}\leq c\|G_D(\phi)\|_{L^p(\Omega)}\leq c(\varepsilon^2+\delta^2)<\delta
\]
when $\delta>0$ and $\delta>\varepsilon>0$ are small enough. This then leads to $F_D(\phi)\in\mathcal{M}_D$.

Next we show that the map $F_D$ is contractive on $\mathcal{M}_D$. Take any $\phi_1,\phi_2\in \mathcal{M}_D$, we have
\[
\begin{split}
	\|F_D(\phi_1)-F_D(\phi_2)\|_{W^{2,p}(\Omega)}&\leq c\|G_D(\phi_1)-G_D(\phi_2)\|_{L^p(\Omega)}.
\end{split}
\]
Using the fact that 
\[
\begin{split}
|G_D(\phi_1)-G_D(\phi_2)|&=|\sigma_b| { |(u_0+\phi_1)^2 - (u_0+\phi_2)^2 | }\\
&\leq c|\phi_1-\phi_2|(2|u_0|+|\phi_1|+|\phi_2|),
\end{split}
\]
we obtain
\[
\begin{split}
	\|G_D(\phi_1)-G_D(\phi_2)\|_{L^p(\Omega)}&\leq c\|\phi_1-\phi_2\|_{L^p(\Omega)}\left(2\|u_0\|_{L^\infty(\Omega)}+\|\phi_1\|_{L^\infty(\Omega)}+\|\phi_2\|_{L^\infty(\Omega)}\right)\\
&\leq c\|\phi_1-\phi_2\|_{W^{2,p}(\Omega)}\left(2\|u_0\|_{W^{2,p}(\Omega)}+\|\phi_1\|_{W^{2,p}(\Omega)}+\|\phi_2\|_{W^{2,p}(\Omega)}\right)\\
&\leq c(\varepsilon+\delta)\|\phi_1-\phi_2\|_{W^{2,p}(\Omega)},
\end{split}
\]
which leads to 
$$
\|F_D(\phi_1)-F_D(\phi_2)\|_{W^{2,p}(\Omega)}\leq c(\varepsilon+\delta)\|\phi_1-\phi_2\|_{W^{2,p}(\Omega)}.
$$
This implies that $F_D$ is a contraction on $\mathcal{M}_D$ when $\delta, \varepsilon$ are sufficiently small. The Contraction Mapping Theorem then concludes that $F_D$ has a unique fixed point $v\in \mathcal{M}_D$ such that $F_D(v)=v$. Therefore $u=u_0+v$ is the solution to \eqref{EQ:Diff TP}. Moreover, following a similar argument as in \eqref{eqn:F_est}, we can derive
\[
	\|v\|_{W^{2,p}(\Omega)}=\|F_D(v)\|_{W^{2,p}(\Omega)}\leq c\|G_D(v)\|_{L^p(\Omega)}\leq c(\varepsilon +\delta) (\|u_0\|_{W^{2,p}(\Omega)}+ \|v\|_{W^{2,p}(\Omega)}).
\]
Choosing $\delta>\varepsilon>0$ small enough, the term containing $\|v\|_{W^{2,p}(\Omega)}$ on the right-hand side of the above estimate can be absorbed by the left-hand side. Therefore, this implies
\[
	\|v\|_{W^{2,p}(\Omega)}\leq c\|u_0\|_{W^{2,p}(\Omega)}.
\]
Finally we have
\[
	\|u\|_{W^{2,p}(\Omega)}=\|u_0+v\|_{W^{2,p}(\Omega)}\leq c\|u_0\|_{W^{2,p}(\Omega)}\leq c\|f\|_{W^{2-1/p,p}(\partial\Omega)}.
\]
The proof is complete.
\end{proof}


 
 


\end{document}